\newtheorem{theorem}{Theorem}
\newtheorem{lemma}[theorem]{Lemma}
\newtheorem{corollary}[theorem]{Corollary}
\newtheorem{assumption}[theorem]{Assumption}
\newtheorem{remark}[theorem]{Remark}
\newcommand{\C}{\mathbb{C}}
\newcommand{\R}{\mathbb{R}}
\newcommand{\NN}{\mathbb{N}}
\newcommand{\X}{\mathbf{X}}
\newcommand{\x}{\mathbf{x}}
\newcommand{\y}{\mathbf{y}}
\newcommand{\ab}{\mathbf{a}}
\newcommand{\bb}{\mathbf{b}}
\newcommand{\cb}{\mathbf{c}}
\newcommand{\db}{\mathbf{d}}
\DeclareMathOperator{\di}{d}
\newcommand{\ib}{\mathbf{i}}
\DeclareMathOperator{\lb}{lb}
\DeclareMathOperator{\st}{s.t.}
\DeclareMathOperator{\re}{Re}
\DeclareMathOperator{\im}{Im}
\DeclareMathOperator{\opt}{opt}
\DeclareMathOperator{\Kprod}{\mathbf{K}}
\DeclareMathOperator{\kerprod}{K}
\DeclareMathOperator{\Ks}{\mathbf{C}}
\DeclareMathOperator{\kers}{C}
\DeclareMathOperator{\Tr}{Tr}
\DeclareMathOperator{\myspan}{span}
\newcommand{\bB}{\mathbb{B}}
\newcommand{\cQ}{\mathcal{T}}
\newcommand{\quantum}{\mathcal{Q}}
\newcommand{\qu}{u}
\newcommand{\qv}{v}
\newcommand{\qt}{e}
\newcommand{\cP}{\mathcal{P}}
\newcommand{\cM}{\mathcal{M}}
\newcommand{\cG}{\mathcal{G}}
\newcommand{\primal}{\mathfrak{p}}
\providecommand{\keywords}[1]
{
  \small	
  \textbf{{Keywords.}} #1
}
\title{\bf Convergence rates for polynomial optimization on set products}
\date{\today}
\author{Victor Magron\thanks{{\tt{vmagron@laas.fr}} LAAS-CNRS \& Institute of Mathematics from Toulouse, France}}
\begin{document}
\maketitle
\begin{abstract}
We consider polynomial optimization problems on Cartesian products of basic compact semialgebraic sets. 
The solution of such problems can be approximated as closely as desired by hierarchies of semidefinite programming relaxations, based on classical sums of squares certificates due to Putinar and Schm\"udgen.  
When the feasible set is the bi-sphere, i.e., the Cartesian product of two unit spheres, we show that the hierarchies based on the {Schm\"udgen/Putinar-type} certificates converge to the global minimum of the objective polynomial at a rate in $O(1/t^2)$, where $t$ is the relaxation order. 
Our proof is based on the polynomial kernel method. 
We extend this result to arbitrary sphere products and give a general recipe to obtain convergence rates for polynomial optimization over products of distinct sets. 
Eventually, we rely on our results for the bi-sphere to analyze the speed of convergence of a semidefinite programming hierarchy approximating the order $2$ quantum Wasserstein distance. \newline
~\\
\keywords{
polynomial optimization, 
Cartesian product, 
Moment-SOS hierarchy, 
Positivstellensatz, 
polynomial kernel method, 
quantum Wassertein distance
}
\end{abstract}
\section{Introduction}
\label{sec:intro}
Given a tuple $\x=(x_1,\dots,x_n)$, we denote by $\R[\x]$ the vector space of polynomials with real coefficients. 
Given $r \in \NN$ and a finite set of polynomials $q, g_1,\dots,g_r \subset \R[\x]$, we consider the problem of minimizing $q$ over the \textit{basic compact semialgebraic set}
$$\X:=\{\x \in \R[\x] : g_1(\x) \geq 0, \dots, g_r(\x) \geq 0 \},$$
namely to compute $q_{\min} := \inf_{\x \in \X} q(\x)$.  Note that this polynomial optimization problem is equivalent to
\begin{align}
\label{eq:pop}
q_{\min} = \sup_{\tau \in \R} \{\tau : q - \tau \in \cP_+(\X)  \},
\end{align}
where $\cP_+(\X)$ is the set of polynomials being nonnegative on $\X$. 
Polynomial optimization problems of the form \eqref{eq:pop} capture difficult instances such as the NP-hard MaxCut problem. 
We refer the interested reader to the recent monographs \cite{henrion2020moment,nie2023moment,sparsebook} that address several theoretical and applicative aspects of polynomial optimization. 

Since the characterization of $\cP_+(\X)$ is a highly non-trivial task, there have been substantial research efforts to provide tractable inner approximations, based on sums of squares decompositions. 
A polynomial $p \in \R[\x]$ is called a \textit{sum of squares} if it can be decomposed as $p= \sum_{i} q_{i}^2$, for some $q_{i} \in \R[\x]$. 
The set of sums of squares of polynomials is denoted by $\Sigma[\x]$. 

Let us define $g_0:=1$, $[r] := \{1,\dots,r\}$ and $g_J:= \displaystyle\prod_{j \in J} g_j$, for all $J \subseteq [r]$. 

For instance one can approximate $\cP_+(\X)$ from the inside while relying either on the 
\textit{preordering} $\cQ(\X) \subseteq \cP_+(\X)$ or the \text{quadratic module} $\mathcal{Q}(\X) \subseteq \cP_+(\X)$,   defined as
\begin{align}
\label{eq:preordering}
\cQ(\X) & := \left\lbrace \sum_{J \in [r]} \sigma_J g_J : \sigma_J \in \Sigma[\x]  \right\rbrace, \\
\mathcal{Q}(\X) & := \left\lbrace \sum_{j=0}^m \sigma_j g_j : \sigma_j \in \Sigma[\x]  \right\rbrace. 
\end{align}
When $\X$ is compact, Schm\"udgen's Positivstellensatz states that every polynomial positive on $\X$ belongs to $\cQ(\X)$:
\begin{theorem}[Schm\"udgen's Positivstellensatz {\cite{Schmudgen1991}}]
\label{th:schmudgen}
Let $\X$ be a basic compact semialgebraic set. 
Then for any polynomial $q \in \cP_+(\X)$ and $\varepsilon > 0$, one has $q + \varepsilon \in \cQ(\X)$. 
\end{theorem}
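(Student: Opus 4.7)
The plan is to reduce the theorem to a strict positivity statement and then apply an abstract representation theorem. First, it suffices to show that every polynomial $p \in \R[\x]$ satisfying $p > 0$ on $\X$ lies in $\cQ(\X)$: given $q \in \cP_+(\X)$ and $\varepsilon > 0$, the shifted polynomial $q + \varepsilon$ is strictly positive on the compact set $\X$, so this stronger statement yields the claim.

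The technical heart of the argument is to establish that $\cQ(\X)$ is \emph{Archimedean}, i.e.\ that there exists $N \in \NN$ with $N - \sum_{i=1}^n x_i^2 \in \cQ(\X)$. Compactness of $\X$ supplies some $N$ for which $N - \|\x\|^2 > 0$ pointwise on $\X$, but an explicit preordering certificate is not immediate. I would follow Wörmann's route and invoke the Krivine--Stengle Positivstellensatz to obtain an identity of the form $(N - \|\x\|^2)\, s = 1 + t$ with $s, t \in \cQ(\X)$; a denominator-clearing manipulation, exploiting closure of $\cQ(\X)$ under products of the $g_j$, then yields $N' - \|\x\|^2 \in \cQ(\X)$ for some possibly larger integer $N'$.

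Once Archimedeanness is secured, I would cite the Kadison--Dubois / Jacobi--Prestel representation theorem: in any commutative ring equipped with an Archimedean preordering $T$, every element strictly positive under each ring homomorphism sending $T$ into $\R_{\geq 0}$ belongs to $T$. For $\R[\x]$ with preordering $\cQ(\X)$, the relevant homomorphisms are exactly the evaluations at points of $\X$ (which is a standard consequence of the Archimedean property combined with the Nullstellensatz), so the theorem gives directly that $p > 0$ on $\X$ implies $p \in \cQ(\X)$, finishing the proof.

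The main obstacle is the Archimedean step. The representation theorem can be invoked essentially as a black box from real algebraic geometry, whereas translating the topological compactness of $\X$ into the algebraic Archimedean certificate for $\cQ(\X)$ is precisely what exploits the multiplicative closure of the preordering under products $g_J$. This also explains why the analogous statement for the quadratic module $\mathcal{Q}(\X)$ fails in general and requires the additional Putinar-type Archimedean hypothesis.
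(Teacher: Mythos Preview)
The paper does not supply its own proof of this theorem; it is stated as a background result and attributed to \cite{Schmudgen1991}. There is therefore nothing in the paper to compare your argument against.

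That said, your sketch is essentially the standard modern algebraic proof, usually attributed to W\"ormann: use Krivine--Stengle to extract an Archimedean certificate for the preordering from the compactness of $\X$, and then invoke the Kadison--Dubois (Jacobi--Prestel) representation theorem. This route is correct and is the one most textbooks now follow. One minor quibble: identifying the ring homomorphisms $\R[\x]\to\R$ that map $\cQ(\X)$ into $\R_{\geq 0}$ with the evaluations at points of $\X$ does not require the Nullstellensatz; any such homomorphism is automatically an evaluation at some $\x_0\in\R^n$ (since $\R[\x]$ is a polynomial ring over $\R$ and the Archimedean condition forces boundedness of the images of the $x_i$), and then $g_j(\x_0)\geq 0$ for all $j$ follows directly. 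It is also worth noting, for context, that Schm\"udgen's original 1991 proof was functional-analytic, proceeding via the solution of the $K$-moment problem and the spectral theorem, rather than through the purely algebraic route you outline.
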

Putinar's Positivstellensatz allows one to replace $\cQ(\X)$ by $\mathcal{Q}(\X)$ in the above result under an assumption slightly stronger than compactness. 
Given a vector $\x \in \R^n$, its Euclidean norm is denoted by $\|\x\|:= \sqrt{\sum_{i=1}^n x_i^2}$. 
The quadratic module $\mathcal{Q}(\X)$ is called \textit{Archimedean} if $N - \|\x\|^2 \in \mathcal{Q}(\X)$, {for some $N >0$}.
\begin{theorem}[Putinar's Positivstellensatz {\cite{putinar1993positive}}]
\label{th:putinar}
Let $\X$ be a basic semialgebraic set and assume that $\mathcal{Q}(\X)$ is Archimedean. 
Then for any polynomial $q \in \cP_+(\X)$ and $\varepsilon > 0$, one has $q + \varepsilon \in \mathcal{Q}(\X)$. 
\end{theorem}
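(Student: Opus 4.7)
The plan is to prove Putinar's Positivstellensatz by contradiction: assume $q+\varepsilon \notin \mathcal{Q}(\X)$, and produce a Borel probability measure $\mu$ supported on $\X$ against which $q+\varepsilon$ integrates to a nonpositive number. This contradicts $q \in \cP_+(\X)$ together with $\varepsilon > 0$, since then $\int_\X (q+\varepsilon)\, d\mu \geq \varepsilon > 0$.

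First, I would extract from the Archimedean hypothesis the stronger statement that $1$ is an \emph{order unit} for the ordering induced by $\mathcal{Q}(\X)$: for every $p \in \R[\x]$ there exists $C_p > 0$ with $C_p \pm p \in \mathcal{Q}(\X)$. The set of such $p$ is a linear subspace of $\R[\x]$ containing $1$, and a standard sums-of-squares manipulation (based on the identities $4p = (p+1)^2 - (p-1)^2$ and $2pq = (p+q)^2 - p^2 - q^2$ together with boundedness of squares) shows it is also closed under multiplication. Combined with the identity $N - x_i^2 = \bigl(N - \sum_j x_j^2\bigr) + \sum_{j\neq i} x_j^2 \in \mathcal{Q}(\X)$, this subring contains every coordinate, hence all of $\R[\x]$.

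Next, I would apply an Eidelheit--Kakutani separation argument to the convex cone $\mathcal{Q}(\X)$ and the exterior point $q+\varepsilon$. Because $\mathcal{Q}(\X)$ is the positive cone of an Archimedean ordered vector space with order unit $1$, the separation delivers a linear functional $L : \R[\x] \to \R$ with $L(1)=1$, $L \geq 0$ on $\mathcal{Q}(\X)$ and $L(q+\varepsilon) \leq 0$. The Archimedean bound $C_p \pm p \in \mathcal{Q}(\X)$ immediately yields $|L(p)| \leq C_p$, so $L$ is continuous in the order-unit seminorm $\|p\|_u := \inf\{C : C \pm p \in \mathcal{Q}(\X)\}$.

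Finally, I would identify $L$ with integration against a measure on $\X$. By continuity, $L$ extends to a positive linear functional on the completion of $\R[\x]$ with respect to $\|\cdot\|_u$; the Riesz--Markov theorem then represents this extension as $L(\cdot)=\int \cdot\, d\mu$ for a probability measure $\mu$ on some compact space. The relations $\sigma g_j \in \mathcal{Q}(\X)$ for all $\sigma \in \Sigma[\x]$ force $L(pg_j^2) \geq 0$ for every $p$ of the right sign pattern, which pins down the support of $\mu$ to $\X$. The ensuing contradiction $0 \geq L(q+\varepsilon) \geq \varepsilon$ closes the proof. The most delicate step -- and the genuine obstacle -- is showing that the order-unit seminorm on $\R[\x]$ agrees with the sup-norm on $\X$, so that the abstract spectrum produced by Riesz--Markov can really be identified with $\X$; this is precisely where the semialgebraic description of $\X$ by the $g_j$'s must be fully exploited via a Gelfand-type spectral identification.
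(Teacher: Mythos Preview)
The paper does not contain a proof of this statement: Theorem~\ref{th:putinar} is quoted as a classical background result from \cite{putinar1993positive}, with no argument supplied. There is therefore nothing in the paper to compare your attempt against.

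That said, your outline is the standard functional-analytic route (essentially Putinar's original argument, later streamlined by Jacobi and Prestel--Delzell): promote the Archimedean hypothesis to the order-unit property, separate $q+\varepsilon$ from the cone $\mathcal{Q}(\X)$ to obtain a positive normalized functional $L$, and then represent $L$ as integration against a probability measure supported on $\X$. The sketch is sound, and you correctly flag the genuinely nontrivial step, namely identifying the abstract Gelfand spectrum with $\X$ (equivalently, showing the order-unit seminorm coincides with $\|\cdot\|_\X$). One small remark: in the separation step you should invoke that $1$ is an \emph{algebraic interior point} of $\mathcal{Q}(\X)$ (a consequence of the order-unit property), since $\mathcal{Q}(\X)$ need not be closed and mere non-membership of $q+\varepsilon$ is not by itself enough to separate.
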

\subsection{Hierarchies of lower bounds for polynomial optimization}
\label{sec:hierarchies}
For a given integer $t \geq \max_{1 \leq j \leq r} \{ \lceil \deg g_j / 2 \rceil \}$, if one considers the truncated preordering or quadratic module defined as
\begin{align}
\label{eq:truncated}
\cQ(\X)_{2t} & := \left\lbrace \sum_{J \in [r]} \sigma_J g_J : \sigma_J \in \Sigma[\x], \quad \deg (\sigma_J g_J) \leq 2t  \right\rbrace,\\
\mathcal{Q}(\X)_{2t} & := \left\lbrace \sum_{j=0}^m \sigma_j g_j : \sigma_j \in \Sigma[\x], \quad \deg (\sigma_j g_j) \leq 2t   \right\rbrace,
\end{align} 
then checking that a polynomial belongs to either $\cQ(\X)_{2t}$ or $\mathcal{Q}(\X)_{2t}$ can be done by solving a \textit{semidefinite program}. 
Semidefinite programming is a particular class of convex programming, that consists of minimizing a linear function under linear matrix inequality constraints  \cite{vandenberghe1996semidefinite}, and for which dedicated solvers can be readily used. 
%

{The hierarchies of Schm\"udgen-type and Putinar-type lower bounds defined as}
\begin{align}
\label{eq:sos}
\lb(q, \cQ(\X))_t := \sup_{\tau \in \R} \{\tau : q - \tau \in \cQ(\X)_{2t}  \}, \\
\lb(q, \mathcal{Q}(\X))_t := \sup_{\tau \in \R} \{\tau : q - \tau \in \mathcal{Q}(\X)_{2t}  \}, \label{eq:putinar}
\end{align}
both converge to $q_{\min}$ when $t$ goes to infinity, as a consequence of Schm\"udgen's and Putinar's Positivstellensatz, respectively. 
{The convergence proof for Putinar-type bounds is provided in \cite{lasserre2001global}.
Let us warn the reader that the lower bounds are indexed by the subscript $t$ whenever the truncation order of the preordering or quadratic module is equal to $2 t$. 
When providing convergence rates for the bi-sphere later on, the lower bounds will be indexed by $2t$ and the truncation order of the preordering or quadratic module will be equal to $4 t$. 
}
Note that when $\X$ involves either equality constraints only or a single inequality constraint, then the  preordering and quadratic module coincide. 
However, only the former remains true when one considers Cartesian products. 
\subsection{Related works on convergence rates}
\label{sec:related}
Recently, there has been a lot of dedicated research efforts in analyzing the asymptotic behavior of the hierarchies of Schm\"udgen-type and Putinar-type bounds defined in \eqref{eq:sos}-\eqref{eq:putinar}, respectively. 
For a polynomial $q$ achieving its minimal value $q_{\min}$ over a general compact semialgebraic set $\X$, it was proved in \cite{schweighofer2004complexity} that the Schm\"udgen-type bounds $\lb(q, \cQ(\X))_t$ converge to $q_{\min}$ at a rate in $O(1/t^c)$, where $c$ is a constant depending on $\X$. 
If $\mathcal{Q}(\X)$ is Archimedean, an initial result  provided in \cite{nie2007complexity} shows that the Putinar-type bounds $\lb(q, \mathcal{Q}(\X))_t$ converge to $q_{\min}$ at a rate in $O(1/\log(t)^c)$. 
Recently, this rate has been exponentially improved in \cite{baldi2023effective}, where the authors prove that the Putinar-type bounds converge to $q_{\min}$ at a rate in $O(1/t^c)$, thus matching the convergence rate of Schm\"udgen-type bounds. 
Without any compactness assumption on the feasible set, one can rely on Putinar-type hierarchies based on sums of squares of rational fractions with (prescribed) uniform denominators. 
For these hierarchies, a similar rate in $O(1/t^c)$ has been obtained in \cite{mai2022complexity}. 

For special cases of feasible sets including the unit sphere/ball, the standard simplex and the hypercube, the value of the constant $c$ can be explicitly given.  
An improved convergence rate in $O(1/t^2)$ for Schm\"udgen-type bounds has been obtained 
in \cite{fang2021sum} for the unit sphere,  
in \cite{slot2022sum} for the unit ball and the standard simplex, and 
in \cite{laurent2023effective} for the hypercube. 
The case of the binary hypercube $\{0,1\}^n$ is covered in \cite{slot2023sum}. 
For Putinar-type bounds, a rate in $O(1/t)$ has been recently obtained in \cite{baldi2024degree} for the hypercube. 
For polynomial optimization with correlative sparsity, stronger rates have been obtained in  \cite{korda2025convergence} under the assumption that the input data are (sufficiently) sparse. 

In addition, convergence rates are available for the \textit{generalized moment problem} (GMP), which consists of minimizing a linear function over the cone of positive Borel measures supported on $\R^n$. 
Note that polynomial optimization can be cast as a special instance of the GMP, and that Schm\"udgen-type and Putinar-type hierarchies can be naturally derived for other GMP instances. 
When the GMP instance involves finitely many constraints, one can obtain a rate similar to the one for polynomial optimization on the measure support  \cite{gamertsfelder2025effective}. 
Specialized rates have been obtained for certain GMP instances involving infinitely (countably) many constraints, in particular for optimal control  \cite{korda2017convergence}, volume estimation  \cite{korda2018convergence}, and exit location estimation of stochastic processes \cite{schlosser2024specialized}. 

We refer to the recent survey \cite{laurent2024overview} 
for a more detailed overview of convergence rates and related aspects, in particular the tightness of the performance analysis, the exponential convergence under local optimality conditions, and finite convergence. 
\subsection{Our contributions}
\label{sec:contrib}
In this work, we mainly focus on hierarchies of Schm\"udgen-type bounds for polynomial optimization over Cartesian products of compact semialgebraic sets. 

We first provide in Section \ref{sec:proofbisphere} a convergence rate of Schm\"udgen-type, {or equivalently Putinar-type}, bounds when minimizing a polynomial on the  product of spheres $\X = S^{n-1} \times S^{n-1}$. 
Given two $n$-dimensional tuples of real variables $\x=(x_1,\dots,x_n)$ and $\y=(y_1,\dots,y_n)$, and a polynomial $q \in \R[\x,\y]$, we define $q_{\min}:= \min_{(\x,\y) \in \X} q(\x,\y) = \min_{\x,\y \in S^{n-1}} q(\x,\y)$ and $q_{\max} :=  \max_{(\x,\y) \in \X} q(\x,\y)$. 
\begin{theorem}
\label{th:bisphererate}
Let $\X = S^{n-1} \times S^{n-1}$ be the Cartesian product of two unit spheres, and let $q \in \R[\x,\y]$ be a polynomial of degree $d$. 
Then for any $t \geq 2 n d \sqrt{d}$, the lower bound ${\lb(q, \mathcal{Q}(\X))_{2t}}$ for the minimization of $q$ over $\X$ satisfies:
\begin{align}
\label{eq:bisphererate}
q_{\min} - {\lb(q, \mathcal{Q}(\X))_{2t}} \leq \frac{C_{\X}(n,d)}{t^2} \cdot (q_{\max} - q_{\min}). 
\end{align}
Here, $C_{\X}(n,d)$ is a constant depending only on $n$ and $d$. 
This constant has a polynomial dependence on $n$ at fixed $d$, and a polynomial dependence on $d$ at fixed $n$. See relation \eqref{eq:cX} for details. 
\end{theorem}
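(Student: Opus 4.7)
My strategy is to adapt the polynomial kernel method of Fang--Fawzi \cite{fang2021sum} from the single sphere to the product $\X = S^{n-1} \times S^{n-1}$ via a tensor-product construction. On $S^{n-1}$, Fang--Fawzi exhibit an explicit Jacobi-type kernel $\kers_s(\langle \x, \x'\rangle)$ of degree $2s$ in $\x$ with two key properties: \emph{(i)} as a polynomial in $\x$ with $\x'$ as parameter, $\kers_s(\langle \x, \x'\rangle)$ lies in $\cQ(S^{n-1})_{2s}$; \emph{(ii)} the associated averaging operator $L_s p(\x) := \int_{S^{n-1}} \kers_s(\langle \x, \x'\rangle) p(\x') d\sigma(\x')$ satisfies $\|p - L_s p\|_{\infty, S^{n-1}} \leq c(n,d) s^{-2} \|p\|_{\infty, S^{n-1}}$ for every polynomial $p$ of degree $\leq d$, with an explicit $c(n,d)$ polynomial in each variable at the other fixed. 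The plan is to work with the tensor operator
\begin{equation*}
(\Kprod_s q)(\x, \y) := \int_{S^{n-1}} \int_{S^{n-1}} \kers_s(\langle \x, \x'\rangle) \, \kers_s(\langle \y, \y'\rangle) \, q(\x', \y') \, d\sigma(\x') d\sigma(\y'),
\end{equation*}
for a parameter $s$ of order $t$ chosen so that the degree budget works out.

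After the harmless normalization replacing $q$ by $(q - q_{\min})/(q_{\max} - q_{\min})$, it suffices to exhibit $q + \varepsilon \in \cQ(\X)_{2t}$ for any $q$ with $0 \leq q \leq 1$ on $\X$ and some $\varepsilon = O(C_\X(n,d) t^{-2})$. I would then establish two structural properties of $\Kprod_s$. The \emph{membership} property states that if $q \geq 0$ on $\X$ then $\Kprod_s q \in \cQ(\X)_{2t}$: for each fixed $(\x', \y') \in \X$ the integrand is the product of two elements of $\cQ(S^{n-1}_\x)_{2s}$ and $\cQ(S^{n-1}_\y)_{2s}$ respectively and hence lies in $\cQ(\X)$ via the product rule for preorderings applied to the generators $\pm(\|\x\|^2 - 1), \pm(\|\y\|^2 - 1)$; integration against the nonnegative weight $q(\x', \y') d\sigma(\x') d\sigma(\y')$ preserves this, while the calibration $s \sim t$ enforces the target degree bound. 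The \emph{approximation} property controls $\|q - \Kprod_s q\|_{\infty, \X}$ via the telescoping
\begin{equation*}
q - \Kprod_s q = \bigl(q - L_s^{(\x)} q\bigr) + L_s^{(\x)}\bigl(q - L_s^{(\y)} q\bigr),
\end{equation*}
where each summand is bounded by the single-sphere estimate applied with the opposite block of variables frozen in $S^{n-1}$, using that $L_s^{(\x)}$ is bounded in sup-norm on $\X$. Combining the two properties then yields the claimed bound on $q_{\min} - \lb(q, \cQ(\X))_{2t}$.

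The main obstacle I expect is the membership step, where one must verify that a tensor product of single-sphere preordering decompositions is itself an element of the product preordering $\cQ(\X)$ with the correct degree budget, and that this structure survives integration against a continuous nonnegative weight on $\X$. This ultimately forces a careful choice of $s$ relative to $t$ (to accommodate the fact that multiplying a degree-$2s$ SOS in $\x$ by a degree-$2s$ SOS in $\y$ produces something of total degree $4s$) together with careful bookkeeping of the Fang--Fawzi constants through the tensor step, which is precisely what produces the constant $C_\X(n,d)$ in \eqref{eq:bisphererate} with polynomial dependence in each of $n$ and $d$.
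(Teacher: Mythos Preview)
Your tensor-product kernel strategy matches the paper's approach, and your membership argument is correct. The gap is in the final combination step. From $\Kprod_s q\in\cQ(\X)_{4t}$ and $\|q-\Kprod_s q\|_\X\le\varepsilon$ one \emph{cannot} deduce $q+\varepsilon\in\cQ(\X)_{4t}$: in the decomposition $q+\varepsilon=\Kprod_s q+\bigl(q-\Kprod_s q+\varepsilon\bigr)$ the second summand is merely nonnegative on $\X$, which says nothing about membership in the truncated preordering. What the paper does instead (following \cite[Lemma~7]{slot2022sum}) is control the \emph{inverse} operator: it shows $\|\Kprod_s^{-1}q-q\|_\X\le\varepsilon$, so that $\Kprod_s^{-1}q+\varepsilon\ge 0$ on $\X$, and then applies the membership property to this nonnegative polynomial together with $\Kprod_s 1=1$ to obtain $q+\varepsilon=\Kprod_s\bigl(\Kprod_s^{-1}q+\varepsilon\bigr)\in\cQ(\X)_{4t}$.

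Your telescoping idea does not transfer cleanly to $\Kprod_s^{-1}$, because the sup-norm contraction of $L_s^{(\x)}$ came from positivity of the kernel, and the inverse kernel need not be pointwise nonnegative. The paper therefore takes a different route for the approximation step: it expands $q=\sum_{k+s\le d}q_{k,s}$ in the tensor harmonic basis, bounds each piece by $\|q_{k,s}\|_\X\le\gamma(S^{n-1})_d^{\,2}$ via an iterated application of the single-sphere harmonic constant (freeze $\y$, decompose in $\x$; then freeze $\x$, decompose in $\y$), and uses Bernoulli's inequality on $1-\lambda_k\lambda_s$ together with $\lambda_k\ge 1/2$ to control $|1-1/(\lambda_k\lambda_s)|$. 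This is what produces the explicit constant $C_\X(n,d)=8\,n^2 d^3\binom{d+2}{2}\,\gamma(S^{n-1})_d^{\,2}$.
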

The proof of Theorem \ref{th:bisphererate} is given in Section \ref{sec:proofbisphere}. 
{
\begin{remark}
\label{rk:bisphererate}
One can replace $2 t$ by $t$ in Theorem \ref{th:bisphererate} to obtain that for any $t \geq 4 n d \sqrt{d}$, the lower bound $\lb(q, \mathcal{Q}(\X))_{t}$ satisfies:
\begin{align}
\label{eq:bisphereratet}
q_{\min} - \lb(q, \mathcal{Q}(\X))_{t} \leq 4 \frac{C_{\X}(n,d)}{t^2} \cdot (q_{\max} - q_{\min}). 
\end{align}
\end{remark}
}

We extend this result in Section \ref{sec:multisphere} (Theorem \ref{th:multisphererate}) to the case of arbitrary sphere products, and in Section \ref{sec:general} (Theorem \ref{th:generalrate}) to the case of products of distinct sets. 
{
As proved in \cite[Theorem 3]{schweighofer2004complexity}, the hierarchy of Schm\"udgen-type relaxations has the convergence rate $O(1/t^c)$ for a general compact set $\X$, where the constant $c$ depends on $\X$.
{
We emphasize that the result from \cite{schweighofer2004complexity} only yields the existence of such a constant $c$ but cannot directly provide the value, e.g., $c=2$ for specific constraint sets. 
The drawback of \cite[Theorem 3]{schweighofer2004complexity} is that $c$ depends on the description of the constraint set in an unspecified way. 
For any concrete situation, such as the sphere, the unit ball, the unit cube or the standard simplex, one could  hope to extract a suitable $c$ from the proof of Section 3 from \cite{schweighofer2004complexity}, see \cite[Remark 10]{schweighofer2004complexity} for more explanation.  
It would in particular require to estimate intermediate constants satisfying the {\L}ojasiewicz inequality used in \cite{schweighofer2004complexity}, which appears to be delicate. 
} 
Theorem \ref{th:multisphererate} implies that one can select $c=2$ when $\X$ is the product of two or more spheres. 
}

Eventually we analyze in Section \ref{sec:rategmp} the convergence rate of a problem arising from quantum information theory, called the quantum Wasserstein distance. 
Computing this distance boils down to solving a specific instance of the generalized moment problem on the complex bi-sphere. 
We provide a rate in $O(1/t^2)$, stated in Theorem \ref{th:qwratereal}, by first changing variables from complex to real, then by combining Theorem \ref{th:bisphererate} with the recent convergence rate results from \cite{gamertsfelder2025effective}, obtained for the generalized moment problem. 

After the initial submission of this work in May 2025, we became aware of the related work \cite{tran2025convergence}, that was submitted as an arXiv preprint two months later in July 2025. 
In \cite[Theorem 3.3]{tran2025convergence} (see also Corollary 3.4), the authors provide similar convergence rates with a proof also based on polynomial kernel products. 
The authors of \cite{tran2025convergence} also provide convergence rate results for the Hausdorff distance between (truncated) pseudo-moment and moment sequences; see Section 3.2 and Section 4 therein. 
We emphasize that the two works were carried out independently. 

\subsection{Outline of the proof technique}
\label{sec:outline}
For a given compact semialgebraic set $\X \subset \R^{n}$ and a polynomial $p \in \R[\x]$, we denote the supremum-norm of $p$ on $\X$ by $\|p\|_{\X}:= \max_{\x \in \X} |p(\x)|$. 
To obtain the convergence rate of Theorem \ref{th:bisphererate}, we follow the exact same technical strategy pursued in \cite{fang2021sum,slot2022sum} for the case of the unit sphere $S^{n-1}$, the unit ball and the standard simplex, as well as in \cite{laurent2023effective} for the case of the hypercube. 
Let us denote by $\R[\x,\y]_d$ the vector space of polynomials of degree at most $d$. 
Similarly let $\cP_+( \X)_d$ be the subset of polynomials from $\R[\x,\y]_d$ that are nonnegative on $\X$. 
Let $q \in \R[\x,\y]_d$  and $\X = S^{n-1} \times S^{n-1}$. 
The goal is to prove that 
\[
{q - q_{\min} + \varepsilon \in \mathcal{Q}(\X)_{4t}  (=\cQ(\X)_{4 t}) }
\]
for some small $\varepsilon$. 
Up to translation and scaling, we assume that $q_{\min}=0$ and $q_{\max}=1$. 
We will construct an invertible linear operator $\Kprod : \R[\x,\y]_d \to \R[\x,\y]_d$ which satisfies the following properties:
\begin{align}
\label{eq:poneref} \Kprod(1)=1, \tag{P1}\\
\label{eq:ptworef} \Kprod p \in {\mathcal{Q}(\X)}_{4 t}, \quad \text{for all } p \in \cP_+( \X)_d,  \tag{P2} \\
\label{eq:pthreeref} \| \Kprod^{-1} q - q \|_{\X} \leq \varepsilon. \tag{P3}
\end{align}
Then it follows from \cite[Lemma 7]{slot2022sum} that $q + \varepsilon \in {\mathcal{Q}(\X)}_{4 t}$ and thus $q_{\min} - \lb(q, {\mathcal{Q}(\X)})_{2t} \leq \varepsilon$. 
As in \cite{fang2021sum,slot2022sum,laurent2023effective}, the statement of Theorem \ref{th:bisphererate} is proven by showing the existence, for each large enough $t$, of an operator $\Kprod$ satisfying \eqref{eq:poneref}, \eqref{eq:ptworef} and \eqref{eq:pthreeref} with $\varepsilon = O(1/t^2)$. 

The way to construct such an operator is to follow the so-called \textit{polynomial kernel method}. 
In Section \ref{sec:ratesphere}, we explain how such a kernel is found in \cite{fang2021sum} for the case of the unit sphere. 
Our rate for the bi-sphere is obtained by multiplying the two kernels associated to each unit sphere. 
This idea of kernel multiplication is the same as the one previously used for the hypercube in \cite{laurent2023effective} (see also \cite{korda2025convergence}). 

\section{Preliminaries: convergence rate on the unit sphere}
\label{sec:ratesphere}
For a multi-index $\alpha = (\alpha_1,\dots,\alpha_n) \in \NN^n$, we use the notation $|\alpha|=\alpha_1 + \dots + \alpha_n$. 
Given $d \in \NN$, let $\NN_d^n := \{\alpha \in \NN^n : |\alpha| \leq d \}$. 
The dimension of $\NN_d^n$ is equal to the number of monomials of degree at most $d$, i.e., $\binom{n+d}{d}$. \\ 
A map $\kers : S^{n-1} \times S^{n-1} : \R$ on $S^{n-1}$ is called a polynomial kernel when $\kers(\x,\x')$ is a polynomial in the variables $\x$, $\x'$. 
{For each $k$, let $H_k[\x] := \myspan \{ P_{\alpha} : \alpha \in \NN^n, |\alpha|=k \}$ be the spanning set of orthonormal \textit{spherical harmonics} of degree $k$, i.e., homogeneous polynomials of degree $k$ lying in the kernel of the Laplace operator. }
Let us equip $S^{n-1}$ with the uniform Haar measure $\mu$, and consider the \textit{Christoffel-Darboux} kernel $\kers_{2 t}$, which is defined in terms of {the} orthonormal basis $\{P_{\alpha} : \alpha \in \NN^n\}$ for $\R[\x]$ with respect to $(S^{n-1},\mu)$ as:
\begin{align}
\label{eq:cdksphere}
\kers_{2t}(\x,\x') := \sum_{k=0}^{2t} \kers^{(k)} (\x,\x'), \quad \text{where } \kers^{(k)} (\x,\x') := \sum_{|\alpha|=k} P_{\alpha}(\x) P_{\alpha}(\x').
\end{align}
Then one associates to $\kers_{2t}$ an operator $\Ks_{2t}: \R[\x] \to \R[\x]$, defined as follows:
\begin{align}
\label{eq:ksphere}
\Ks_{2t} p(\x) := \int_{S^{n-1}} \kers_{2t}(\x,\x') p(\x')  \di  \mu(\x') .
\end{align}
This operator $\Ks_{2t}$ is called \textit{reproducing} as it satisfies $\Ks_{2t} p = p$, for all $p \in \R[\x]_{2t}$. Therefore, all its eigenvalues are equal to $1$. 

One has $p(\x) = \sum_{k=0}^d p_k(\x)$, for all $\x \in S^{n-1}$, with $p_k \in H_k$ given by 
\[
p_k = \int_{S^{n-1}} \kers^{(k)} (\x,\x') p(\x') \di  \mu(\x') .
\]

In this case one has the following closed form of the Christoffel-Darboux kernel 
\begin{align}
\label{eq:funkhecke} 
\kers_{2t}(\x,\x') = \sum_{k=0}^{2t} \cG_k^{(n-1)} (\x \cdot \x'), \quad \text{for all } \x,\x' \in S^{n-1}.
\end{align}
In the above formula, the set $\{\cG_k^{(n)} : k \in \NN \}$ of \textit{Gegenbauer polynomials} are defined, for all $n \geq 2$, as the set of orthogonal polynomials with respect to the weight function $w_n(x):= \hat{w}_n (1-x^2)^{\frac{n-2}{2}}$, where $\hat{w}_n$ is a normalization constant ensuring that $\int_{-1}^1 w_n(x)  \di  x = 1$. 
We refer the interested reader to \cite{szego1975orthogonal} for a detailed survey about univariate orthogonal polynomials. 
The formula \eqref{eq:funkhecke} is also known as the \textit{Funk-Hecke} formula, stated for instance in \cite[Theorem A.34]{rubin2015introduction}. 

The next step is to perturb the eigenvalues of the reproducing operator $\Ks_{2t}$ by considering the following kernel instead:
\begin{align}
\label{eq:cdksphereperturb}
\kers_{2t}(\x,\x'; \lambda) := \sum_{k=0}^{2t} \lambda_k \kers^{(k)} (\x,\x') = \sum_{k=0}^{2t} \lambda_k \cG_k^{(n-1)} (\x \cdot \x'), \quad \text{for all } \x,\x' \in S^{n-1}. 
\end{align}
Then $\lambda = (\lambda_k)$ is the set of eigenvalues of the operator obtained in \eqref{eq:ksphere} by considering $\kers_{2t}(\x,\x'; \lambda)$ instead of $\kers_{2t}(\x,\x')$. 
The following result provides one way to prove that $\Ks_{2t}$ satisfies \eqref{eq:ptworef}. 
\begin{lemma}[See {\cite[Lemma 11]{slot2022sum}}]
\label{lemma:ptwo}
Let $\X$ be a compact semialgebraic set, and let $\mu$ be a finite measure supported on $\X$. 
Let $Q$ be a convex cone, and suppose that $\kers : \X \times \X \to \R$ is a polynomial kernel for which $\kers(\cdot,\x') \in Q$ for each $\x' \in \X$ fixed. 
Then if $p \in \R[\x]$ is nonnegative on $\X$, we have $\Ks p \in Q$. 
\end{lemma}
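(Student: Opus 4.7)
The plan is to recognize $\Ks p$ as a vector-valued integral of a function taking values in the cone $Q$, and then invoke a Riemann-sum approximation together with closedness of $Q$ in the relevant finite-dimensional polynomial space.

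First, I would fix an arbitrary $\x' \in \X$. By hypothesis, $\kers(\cdot,\x') \in Q$, and $p(\x') \geq 0$ because $p$ is nonnegative on $\X$. Since $Q$ is a convex cone, the polynomial $p(\x')\,\kers(\cdot,\x')$ again lies in $Q$. Moreover, since $\kers(\x,\x')$ is a polynomial in $(\x,\x')$ of some fixed total degree and $p$ is a polynomial, the map
\[
\Phi : \X \longrightarrow \R[\x], \qquad \x' \mapsto p(\x')\,\kers(\cdot,\x'),
\]
is a continuous function into a finite-dimensional subspace $V \subseteq \R[\x]$ determined by $\deg \kers$ and $\deg p$, and its entire image lies in $Q \cap V$.

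Next, I would realize
\[
\Ks p \,=\, \int_{\X} \Phi(\x')\, \di\mu(\x')
\]
as the limit, in the (unique) norm topology on $V$, of Riemann sums $\sum_i \mu(A_i)\,\Phi(\x'_i)$ associated with finite measurable partitions $\{A_i\}$ of $\X$ and sample points $\x'_i \in A_i$. Each such sum is a nonnegative combination of elements of $Q$, since $\mu(A_i) \geq 0$ and $\Phi(\x'_i) \in Q$, and therefore lies in $Q$ itself. Passing to the limit, $\Ks p$ belongs to the closure $\overline{Q}$ inside $V$.

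Finally, for the application to $Q = \cQ(\X)_{2t}$, I would appeal to the standard fact that the truncated preordering (and equivalently the truncated quadratic module) is a closed convex cone in the finite-dimensional vector space $\R[\x]_{2t}$, being the image of a compact slice of a product of closed cones of bounded-degree sums of squares under the continuous map $(\sigma_J) \mapsto \sum_J \sigma_J g_J$. Hence $\overline{Q} = Q$ and $\Ks p \in Q$, which is precisely \eqref{eq:ptworef}.

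The only genuinely nontrivial ingredient is the closedness of $Q = \cQ(\X)_{2t}$ inside $\R[\x]_{2t}$; once this is granted, the remainder reduces to an elementary measure-theoretic Riemann-sum approximation combined with the definition of a convex cone. I would also note that the argument is fully general: it applies verbatim to any closed convex cone $Q$ of polynomials for which the kernel satisfies $\kers(\cdot,\x') \in Q$ pointwise, which is what makes the polynomial kernel method flexible enough to yield \eqref{eq:ptworef} for both Schm\"udgen-type and Putinar-type truncations.
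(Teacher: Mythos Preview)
Your Riemann-sum approach is genuinely different from the paper's, which instead invokes Tchakaloff's theorem on the existence of cubature rules. Since $\mu$ is a finite positive Borel measure supported on the compact set $\X$, Tchakaloff guarantees finitely many nodes $\x'_1,\dots,\x'_N \in \X$ and weights $w_1,\dots,w_N > 0$ such that $\int_{\X} q\,\di\mu = \sum_{i=1}^N w_i\,q(\x'_i)$ for every polynomial $q$ up to the degree of $\x' \mapsto \kers(\x,\x')\,p(\x')$. Applying this coefficientwise in $\x$ gives the \emph{exact} finite identity
\[
\Ks p \;=\; \sum_{i=1}^N w_i\,p(\x'_i)\,\kers(\cdot,\x'_i),
\]
a nonnegative combination of elements of $Q$, hence $\Ks p \in Q$ for \emph{any} convex cone $Q$---no limit is taken and no closedness hypothesis is needed.

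Your route, by contrast, only yields $\Ks p \in \overline{Q}$, so it does not establish the lemma as stated (where $Q$ is an arbitrary convex cone). You then attempt to recover the case $Q=\cQ(\X)_{2t}$ by asserting closedness, but the justification you offer is not a proof: continuous (even linear) images of closed cones need not be closed, and neither need Minkowski sums of closed cones---which is exactly what $\cQ(\X)_{2t} = \sum_{J} g_J \cdot \Sigma[\x]_{2t - \deg g_J}$ is. Closedness of truncated preorderings and quadratic modules is a genuinely delicate issue that does not follow from the ``compact slice'' sketch you give; this is precisely the point the Tchakaloff argument is designed to circumvent.
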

{
\begin{remark}
\label{rk:ptwo}
Let $\X$, $\mu$ be as in Lemma \ref{lemma:ptwo}. 
When selecting $Q=\cQ(\X)_{2t}$ in Lemma \ref{lemma:ptwo}, the operator $\Ks$ satisfies \eqref{eq:ptworef} (with $\cQ(\X)_{2t}$ instead of $\mathcal{Q}(\X)_{4t}$). 
If $\X$ is the unit sphere, one can equivalently select $Q = \mathcal{Q}(\X)_{2t}$. 
\end{remark}
}
The proof of Lemma \ref{lemma:ptwo} relies on Tchakaloff's Theorem \cite{tchakaloff1957cubature} and the exisence of cubature rules. 
Now coming back to the case of the unit sphere, assume that $\lambda = (\lambda_k)$ is the set of coefficients of a univariate sum of squares in the basis of Gegenbauer polynomials. 
Then at each fixed $\x'$, the polynomial $\kers_{2t}(\x,\x'; \lambda)$, defined in \eqref{eq:cdksphereperturb}, is equal to a sum of squares of degree at most $2t$ on $\X$, so $\kers_{2t}(\x,\x'; \lambda) \in  {\mathcal{Q}(\X)_{2t}}$ and Lemma \ref{lemma:ptwo} implies that $\Ks_{2t}$ satisfies \eqref{eq:ptworef}. 

Next, if the harmonic decomposition of $q$ is $q(\x)=\sum_{k=0}^d q_k(\x)$ for all $\x \in S^{n-1}$, with $q_k \in H_k[\x]$, then one has $\Ks_{2t} q = \sum_{k=0}^d \lambda_k q_k$. 
If in addition, $\lambda_0=1$ and $\lambda_k \neq 0$ for all $k$ then $\Ks_{2t}^{-1} q = \sum_{k=0}^d 1/\lambda_k q_k$. 
By \cite[Lemma 14]{slot2022sum}, the resulting perturbed operator satisfies \eqref{eq:poneref} if $\lambda_0=1$, and it satisfies \eqref{eq:pthreeref} with 
\[
\varepsilon = \max_{1\leq k \leq d} \max_{\|\x\|=1} |q_k(\x)| \cdot \left|1 - 1/\lambda_k \right|.
\]
It remains to give an upper bound of the two multiplied quantities. 
For the first quantity, one can define the so-called \textit{harmonic constant}:
\begin{align}
\label{eq:harmonic}
\gamma(S^{n-1})_d := \max_{p \in \R[\x]_d} \max_{0\leq k \leq d} \frac{\|p_k\|_{S^{n-1}}}{\|p\|_{S^{n-1}}}.
\end{align}
In case of minimizing a homogeneous polynomial of degree $d$, one can define a similar constant denoted by $\gamma(S^{n-1})_{=d}$. 
This latter constant can be bounded independently of the dimension $n$ as a consequence of \cite[Proposition 5]{fang2021sum}. 
More generally, one has the following result. 
\begin{lemma}
\label{lemma:harmonic}
For all integers $d, n$, {with $n \geq 3$}, the constant $\gamma(S^{n-1})_d$ depends polynomially on $n$ (when $d$ is fixed) and polynomially on $d$ (when $n$ is fixed). One has
\begin{align}
\label{eq:harmonicbound}
\gamma(S^{n-1})^2_d \leq \max_{0\leq k\leq d} \cG_k^{n}(1) = \max_{0\leq k \leq d} \left(1+\frac{2k}{n-2}\right) \cdot \binom{k+n-3}{k}. 
\end{align}
\end{lemma}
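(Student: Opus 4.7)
The plan is to exploit the reproducing-kernel structure of the spherical harmonic spaces $H_k[\x]$ together with a Cauchy-Schwarz argument. Given $p \in \R[\x]_d$ with harmonic decomposition $p = \sum_{k=0}^d p_k$, the Funk-Hecke identity \eqref{eq:funkhecke} and the orthogonality of the $H_j$'s in $L^2(S^{n-1},\mu)$ give, for each fixed $\x \in S^{n-1}$,
\[
p_k(\x) \;=\; \int_{S^{n-1}} \kers^{(k)}(\x,\x')\, p(\x')\, \di\mu(\x') \;=\; \int_{S^{n-1}} \cG_k^{(n-1)}(\x \cdot \x')\, p_k(\x')\, \di\mu(\x'),
\]
since $\kers^{(k)}(\x,\cdot) \in H_k$ is orthogonal to every $p_j$ with $j \neq k$.

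Next I would apply the Cauchy-Schwarz inequality in $L^2(S^{n-1},\mu)$ to write
\[
|p_k(\x)|^2 \;\leq\; \|\kers^{(k)}(\x,\cdot)\|_{L^2(\mu)}^2 \cdot \|p_k\|_{L^2(\mu)}^2.
\]
The first factor is handled by the reproducing property of $\kers^{(k)}$ on $H_k$: testing the kernel against itself gives $\|\kers^{(k)}(\x,\cdot)\|_{L^2(\mu)}^2 = \kers^{(k)}(\x,\x) = \cG_k^{(n-1)}(1)$, a constant independent of $\x$. For the second factor, the $L^2$-orthogonality of the harmonic decomposition yields $\|p_k\|_{L^2(\mu)} \le \|p\|_{L^2(\mu)}$, and since $\mu$ is a probability measure $\|p\|_{L^2(\mu)} \le \|p\|_{S^{n-1}}$. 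Taking a supremum over $\x$ and then a maximum over $0\le k\le d$ gives exactly $\gamma(S^{n-1})_d^2 \le \max_{0\le k\le d} \cG_k^{(n-1)}(1)$.

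For the closed-form on the right-hand side of \eqref{eq:harmonicbound}, I would invoke the classical identification of $\cG_k^{(n-1)}(1)$ (in the paper's orthonormal normalization) with $\dim H_k = \binom{n+k-1}{k} - \binom{n+k-3}{k-2}$, and then simplify algebraically to obtain
\[
\cG_k^{(n-1)}(1) \;=\; \bigl(1+\tfrac{2k}{n-2}\bigr)\binom{k+n-3}{k} \;=\; \frac{(n-2+2k)(n-3+k)(n-4+k)\cdots(n-1)}{k!}.
\]
The polynomial-dependence claims then follow by inspection: the last expression is a polynomial in $n$ of degree $k\le d$ (giving the polynomial dependence in $n$ at fixed $d$), and, after regrouping via $\binom{k+n-3}{k}=\binom{k+n-3}{n-3}$, a polynomial in $k$ of degree $n-2$ (giving the polynomial dependence in $d$ at fixed $n$, since we take the maximum over $k\le d$).

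The analytic core of the argument is routine; the step I expect to require the most care is the bookkeeping around normalization conventions, since the paper's $\cG_k^{(n-1)}$ is fixed by orthonormality for the weight $w_{n-1}$ so that the Funk-Hecke formula \eqref{eq:funkhecke} holds without extra multiplicative constants, whereas classical references such as \cite{szego1975orthogonal} work with Gegenbauer polynomials carrying an explicit $\dim H_k / \cG_k^{(n-1)}(1)$ prefactor in the addition theorem. Once this identification is pinned down, no further technical obstacle appears.
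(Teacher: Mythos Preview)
Your proposal is correct and follows essentially the same approach as the paper's own proof: the paper simply cites \cite[Proposition~15]{slot2022sum} for the reproducing-kernel/Cauchy--Schwarz bound $\gamma(S^{n-1})_d^2 \le \max_k \kers^{(k)}(\x,\x)=\max_k \cG_k^{(n-1)}(1)$ and \cite[(2.9)]{xu1998summability} for the explicit value at~$1$, and you have spelled out precisely those two ingredients. Your caution about normalization conventions is well placed but, as you note, resolves cleanly once the Funk--Hecke identity \eqref{eq:funkhecke} is taken as the defining normalization.
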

\begin{proof}
As in the proof of \cite[Proposition 15]{slot2022sum}, one relies on the reproducing properties of the Christoffel-Darboux kernel to show that 
\[
\gamma(S^{n-1})^2_d \leq \max_{0\leq k \leq d} \max_{\x \in S^{n-1}} \kers^{(k)} (\x,\x) = \max_{0 \leq k \leq d} \cG_k^{(n-1)}(1). 
\]
The desired expression then follows from the value of the Gegenbauer polynomials at $1$, see, e.g., (2.9) in  \cite{xu1998summability}. 
\end{proof}
To finally obtain the desired convergence rate, the second quantity can be bounded as follows. 
\begin{lemma}[See {\cite[Theorem 6]{fang2021sum}}]
\label{lemma:boundlambda}
Let $n,d \in \NN$. Then for every $t \geq 2 n d \sqrt{d}$ there exists a univariate sum of squares $\sigma(x)=\sum_{k=0}^{2t} \lambda_k \cG_k^{(n)}(x)$ of degree $2t$ with $\lambda_0=1$, $1/2 \leq \lambda_k \leq 1$, for all $1 \leq k \leq d$, and 
\begin{align}
\label{eq:boundlambda}
\sum_{k=1}^{d} (1 - \lambda_k ) &\leq \frac{ n^2 d^3}{t^2}, \\
\sum_{k=1}^{d} \left|1 - 1/\lambda_k \right| \leq 2 \sum_{k=1}^{d} (1 - \lambda_k ) & \leq \frac{2 n^2 d^3}{t^2} \nonumber.
\end{align}
\end{lemma}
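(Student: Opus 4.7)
The plan is to follow Fang--Fawzi and exhibit an explicit degree-$2t$ univariate SOS $\sigma$ that acts like a mollified Dirac at $x = 1$ with respect to the Gegenbauer weight $w_n$, so that its low-order Gegenbauer coefficients $\lambda_k$ stay close to $1$.

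First I would pick a nonnegative candidate $\tilde\sigma$ of degree $2t$, SOS on $[-1,1]$, sharply peaked at $x = 1$; a convenient choice is a Jackson-type polynomial or a squared Christoffel--Darboux kernel whose zeros cluster away from $1$. Normalizing by its $w_n$-mass,
\[
\sigma(x) := \frac{\tilde\sigma(x)}{\int_{-1}^1 \tilde\sigma(y) w_n(y) \di y},
\]
one has $\int \sigma w_n = 1$, which enforces $\lambda_0 = 1$ since $\cG_0^{(n)} \equiv 1$ and the Gegenbauer basis is $w_n$-orthogonal. The SOS property is preserved under positive rescaling, so $\sigma$ remains a degree-$2t$ univariate SOS.

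For $k \geq 1$, the Funk--Hecke formula identifies $\lambda_k$ with a one-dimensional integral of $\sigma$ against the normalized Gegenbauer polynomial $\widehat{\cG}_k^{(n)} := \cG_k^{(n)}/\cG_k^{(n)}(1)$ weighted by $w_n$, so that
\[
1 - \lambda_k = \int_{-1}^1 \bigl( 1 - \widehat{\cG}_k^{(n)}(x) \bigr) \sigma(x) w_n(x) \di x.
\]
The key estimate is a pointwise bound of the form $0 \leq 1 - \widehat{\cG}_k^{(n)}(x) \leq C(n)\, k^2\, (1-x)$ on $[-1,1]$, which follows from explicit formulas for $(\widehat{\cG}_k^{(n)})'(1)$ together with the classical inequality $|\widehat{\cG}_k^{(n)}(x)| \leq 1$. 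Combined with a Jackson-style concentration estimate $\int (1-x) \sigma(x) w_n(x) \di x = O(1/t^2)$---the defining property of the mollifier $\tilde\sigma$ at degree $2t$---this yields $|1 - \lambda_k| = O(n^2 k^2/t^2)$ after tracking all dimension-dependent constants.

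Summing over $k = 1,\dots,d$ produces $\sum_{k=1}^{d}(1-\lambda_k) \leq n^2 d^3/t^2$; the hypothesis $t \geq 2nd\sqrt{d}$ makes the right-hand side at most $1/4$, which in turn forces each $\lambda_k \in [1/2,1]$, and the second inequality of the lemma then follows from $|1 - 1/\lambda_k| = (1-\lambda_k)/\lambda_k \leq 2(1-\lambda_k)$. The main obstacle I expect is pinning down the $n^2$ factor: the Gegenbauer derivative bound, the normalization between $\widehat{\cG}_k^{(n)}$ and $\cG_k^{(n)}$, and the Jackson moment against $w_n$ each contribute factors of $n$, and the mollifier $\tilde\sigma$ must be tuned---trading off its degree, peak width, and $w_n$-mass---so these factors combine to exactly $n^2 d^3 / t^2$ rather than a worse power. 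This balancing is the technical heart of the Fang--Fawzi construction.
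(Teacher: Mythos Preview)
The paper does not actually give a proof of this lemma: it is stated as a quotation of \cite[Theorem~6]{fang2021sum} and used as a black box throughout Sections~\ref{sec:ratesphere}--\ref{sec:ratepop}. So there is no ``paper's own proof'' to compare against; the only relevant benchmark is the Fang--Fawzi construction itself.

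Your sketch is a faithful high-level outline of that construction: build a degree-$2t$ SOS mollifier concentrated at $x=1$, normalize so $\lambda_0=1$, read off $1-\lambda_k$ as an integral of $1-\widehat{\cG}_k^{(n)}$ against the mollifier, bound the integrand linearly in $(1-x)$ via the derivative of $\widehat{\cG}_k^{(n)}$ at $1$, and close with a Jackson-type first-moment bound $\int (1-x)\sigma w_n = O(1/t^2)$. The deduction of $\lambda_k\in[1/2,1]$ from $t\geq 2nd\sqrt d$ and of the second displayed inequality from $|1-1/\lambda_k|\leq 2(1-\lambda_k)$ is exactly right. The only place your write-up is vague is the one you flag yourself: in Fang--Fawzi the mollifier is not an arbitrary ``Jackson-type'' square but a specific choice (essentially a power of the Fej\'er/Jackson kernel adapted to $w_n$) for which the constants in the derivative bound $(\widehat{\cG}_k^{(n)})'(1)=k(k+n-1)/n$ and in the first-moment estimate combine to give precisely $n^2 d^3/t^2$; an unspecified squared Christoffel--Darboux kernel would not obviously give the same constant. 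If you were writing this out in full you would need to commit to that explicit kernel and carry the constants through, but as a proof plan matching the cited source your proposal is correct.
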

\section{Convergence rate for polynomial optimization on set products}
\label{sec:ratepop}
\subsection{Products of two spheres}
\label{sec:proofbisphere}
This section is dedicated to the proof of Theorem \ref{th:bisphererate}. 
\begin{proof}
Up to translation and scaling, we assume that $q_{\min}=0$ and $q_{\max}=1$. 
Let us equip $\X = S^{n-1} \times S^{n-1}$ with the product measure $\mu_{\X} := \mu \otimes \mu$, where $\mu$ is the uniform Haar measure on $S^{n-1}$. 
Given the orthonormal basis $\{P_{\alpha} : \alpha \in \NN^n\}$ for $\R[\x]$ (respectively for $\R[\y]$) with respect to $(S^{n-1},\mu)$,  $\{P_{\alpha}(\x) P_{\beta}(\y)  : \alpha, \beta \in \NN^n\}$ is an orthonormal basis for $\R[\x,\y]$ with respect to $(\X,\mu_{\X})$. 
With $t \geq 2 n d \sqrt{d}$, and $\lambda=(\lambda_k)$ as in Lemma \ref{lemma:boundlambda}, we build the desired operator $\Kprod_{2t} : \R[\x,\y] \to \R[\x, \y]$ as follows:
\begin{align}
\label{eq:cdk}
\Kprod_{2t} q(\x,\y) & := \int_{\X} \kerprod_{2t}((\x,\y),(\x',\y'); \lambda) q(\x',\y')  \di  \mu_{\X}(\x',\y'), \\
\text{where }\kerprod_{2t}((\x,\y),(\x',\y'); \lambda) & := \sum_{k,s=0}^{2t} \lambda_k \lambda_s \kerprod^{(k,s)} ((\x,\y),(\x',\y')),\\ 
\quad \text{and } \kerprod^{(k,s)} ((\x,\y),(\x',\y')) & := \sum_{|\alpha|=k, |\beta|=s} P_{\alpha}(\x) P_{\beta}(\y) P_{\alpha}(\x') P_{\beta}(\y').
\end{align} 
Let us show that $\Kprod_{2t}$ satisfies the three desired properties \eqref{eq:poneref}, \eqref{eq:ptworef} and \eqref{eq:pthreeref}. 

For each $k,s$, one defines $H_{k,s}[\x,\y] := H_k[\x] \otimes H_s[\y] = \myspan \{P_{\alpha}(\x) P_{\beta}(\y)  : \alpha, \beta \in \NN^n, |\alpha|=k, |\beta|=s \}$.

Then every $q \in \R[\x,\y]_d$ has a decomposition 
\begin{align}
\label{eq:qharmonic}
q(\x,\y) = \sum_{k+s \leq d} q_{k,s}(\x,\y), \text{ for all } (\x,\y) \in \X, 
\end{align}
with $q_{k,s} \in H_{k,s}[\x,\y]$, given by
\begin{align}
\label{eq:qks}
q_{k,s} = \int_{\X} \kers^{(k)} (\x,\x') \kers^{(s)} (\y,\y') q(\x',\y') \di  \mu(\x') .
\end{align}
By construction, one has 
\begin{align*}
\kerprod^{(k,s)} ((\x,\y),(\x',\y')) & = \kers^{(k)} (\x,\x') \kers^{(s)} (\y,\y'), \\
\kerprod_{2t} ((\x,\y),(\x',\y'); \lambda) & = \sum_{k,s=0}^{2t} \lambda_k \lambda_s \kers^{(k)} (\x,\x') \kers^{(s)} (\y,\y') = \kers_{2t} (\x,\x'; \lambda) \kers_{2t} (\y,\y'; \lambda), 
\end{align*}
where $\kers_{2t} (\cdot,\cdot; \lambda)$ has been defined in \eqref{eq:cdksphereperturb}. 
Then one has
\begin{align}
\label{eq:diagprod}
\Kprod_{2t} q = \sum_{k+s\leq d} \lambda_k \lambda_s q_{k,s}.
\end{align}
Since $\lambda_0=1$, \eqref{eq:poneref} is satisfied. 

To prove that \eqref{eq:ptworef} is satisfied, recall that 
\begin{align*}
\kers_{2t}(\x,\x'; \lambda) := \sum_{k=0}^{2t} \lambda_k \kers^{(k)} (\x,\x') = \sum_{k=0}^{2t} \lambda_k \cG_k^{(n-1)} (\x \cdot \x'), \quad \text{for all } \x,\x' \in S^{n-1}, \\
\kers_{2t}(\y,\y'; \lambda) := \sum_{s=0}^{2t} \lambda_s \kers^{(s)} (\y,\y') = \sum_{s=0}^{2t} \lambda_s \cG_s^{(n-1)} (\y \cdot \y'), \quad \text{for all } \y,\y' \in S^{n-1}, 
\end{align*}
and that we assumed that $\lambda=(\lambda_k)$ is the set of coefficients of a univariate sum of squares in the basis of Gegenbauer polynomials. \\
Then at each fixed $(\x',\y')$, the polynomial  $\kerprod_{2t} ((\x,\y),(\x',\y'); \lambda)$ is equal to a product of sums of squares of degree at most $4 t$ on $\X$, so $\kerprod_{2t} ((\x,\y),(\x',\y'); \lambda) \in {\mathcal{Q}(\X)}_{4t}$ and Lemma \ref{lemma:ptwo} (when selecting $Q=\cQ(\X)_{4t} {=\mathcal{Q}(\X)_{4t}} $) implies that $\Kprod_{2t}$ satisfies \eqref{eq:ptworef}. 
Note that the degree $4 t$ of this representation is twice larger than the one obtained in the case of the unit sphere.   
%

We end the proof by showing that the operator satisfies \eqref{eq:pthreeref}, namely, 
\begin{align}
\label{eq:pthree}
\max_{(\x,\y) \in \X} | \Kprod^{-1} q(\x,\y) - q(\x,\y)| \leq \varepsilon = \frac{C_{\X}(n,d)}{t^2},
\end{align}
with 
\begin{align}
\label{eq:cX}
C_{\X}(n,d) := 8 n^2 d^3 \binom{d+2}{2} \cdot \gamma(S^{n-1})_d^2,
\end{align}
and $\gamma(S^{n-1})_d$ is the constant defined in  \eqref{eq:harmonic}. 
By \eqref{eq:diagprod}, one has 
\begin{align}
\label{eq:pthreeprod}
\Kprod_{2t}^{-1} q - q = \sum_{1\leq k+s\leq d}  \left( \frac{1}{\lambda_k \lambda_s} -1 \right) q_{k,s}.
\end{align}
To bound this quantity over $\X$, we follow a strategy similar to the one from \cite{laurent2023effective}, that relies on Bernoulli's identity: for any $x \in [0, 1]$ and $m \geq 1$, we have $1-(1-x)^m \leq m x$. 
For all $1 \leq k \leq d$, let us define $\eta_k:=1-\lambda_k$, and $\eta := \max_{1 \leq k \leq d} \eta_k$. 
Since we selected $\lambda$ as in Lemma \ref{lemma:boundlambda}, one has $1/2 \leq \lambda_k \leq 1$, for all $1 \leq k \leq d$, then $0 \leq \eta \leq 1/2$, so one can apply Bernoulli's identity to obtain
\begin{align}
\label{eq:bernoulli}
1 - \lambda_k \lambda_s = 1 - (1 - \eta_k) (1 - \eta_s) \leq 1 - {(1 - \eta)^2} \leq 2 \eta. 
\end{align}
By \eqref{eq:boundlambda}, one has $1 - \lambda_k \leq \sum_{k=1}^d (1 - \lambda_k) \leq \frac{ n^2 d^3}{t^2}$, for all $1 \leq k \leq d$, implying that $\eta \leq \frac{ n^2 d^3}{t^2}$. 
Next, one has
\begin{align}
\label{eq:fracbound}
\left|1 - \frac{1}{\lambda_k \lambda_s}\right| = \frac{|1 - \lambda_k \lambda_s|}{|\lambda_k \lambda_s|} \leq 4 (1 - \lambda_k \lambda_s). 
\end{align}
Then by combining  \eqref{eq:bernoulli} and \eqref{eq:fracbound}, we obtain 
\[
\sum_{1\leq k+s\leq d}  \left( \frac{1}{\lambda_k \lambda_s} -1 \right) \leq 8 \frac{ n^2 d^3}{t^2} \cdot \left|\NN_d^2\right| = 8 \frac{ n^2 d^3}{t^2}  \binom{d+2}{2}.
\]
Eventually let us define $p_{k}(\x,\y):=\sum_{s} q_{k,s} (\x,\y)$. 
Let us fix $\y \in S^{n-1}$.  
From \eqref{eq:qharmonic} one obtains the harmonic decomposition of $q(\x,\y) = \sum_{k} p_{k} (\x,\y)$, with $p_{k}(\cdot,\y) \in H_k[\x]$. Therefore, one has
\[ 
\|p_{k}(\cdot,\y)\|_{S^{n-1}} \leq \gamma(S^{n-1})_d \cdot \|q(\cdot,\y)\|_{S^{n-1}} \leq \gamma(S^{n-1})_d.
\] 
The above inequality is valid for all $\y \in S^{n-1}$, thus one has $\|p_{k}\|_{\X} \leq \gamma(S^{n-1})_d$. 
Now at each fixed $k$ and $\x$, one has $q_{k,s}(\x,\cdot) \in H_s[\y]$, yielding the harmonic decomposition $p_{k}(\x,\cdot)=\sum_{s} q_{k,s} (\x,\cdot)$ and 
\[
\|q_{k,s}(\x,\cdot)\|_{S^{n-1}} \leq \gamma(S^{n-1})_d \cdot \|p_k(\x,\cdot)\|_{S^{n-1}} \leq \gamma(S^{n-1})_d \cdot \|p_{k}\|_{\X} \leq \gamma(S^{n-1})^2_d. 
\]
Since the above inequality is valid for all $\x \in S^{n-1}$, we obtain $\|q_{k,s}\|_{\X} \leq \gamma(S^{n-1})^2_d$, yielding the desired bound. 
\end{proof}
\begin{remark}
If one assumes that $q$ is bi-homogeneous polynomial of degree $d$, i.e., homogeneous of degree $d$ with respect to $\x$ (at fixed $\y$) and $\y$ (at fixed $\x$), then one can replace $\gamma(S^{n-1})^2_d$ by the quantity $\gamma(S^{n-1})^2_{=d}$ that can be bounded independently of the dimension $n$. 
This follows from \cite[Proposition 5]{fang2021sum} which states that the supremum-norm of the harmonic components of a homogeneous polynomial can be bounded by a constant independent of the dimension. 
\end{remark}
\subsection{Extension to arbitrary sphere products}
\label{sec:multisphere}
One can easily extend the rate given in Theorem \ref{th:bisphererate} when optimizing a polynomial over the Cartesian product of $m$  spheres $\X = S^{n-1} \underbrace{\times \dots \times}_{m-1}  S^{n-1}$. 
\begin{theorem}
\label{th:multisphererate}
For $m \geq 2$, let $\X = S^{n-1} \times \dots \times  S^{n-1}$ be the Cartesian product of $m$ unit spheres, and let $q \in \R[\x_1,\dots,\x_m]_d$. 
Then for any $t \geq 2 n d \sqrt{d}$, the lower bound $\lb(q, {\mathcal{Q}(\X)})_{mt}$ for the minimization of $q$ over $\X$ satisfies:
\begin{align}
\label{eq:multisphererate}
q_{\min} - \lb(q, {\mathcal{Q}(\X)})_{mt} \leq \frac{C_{\X}(m,n,d)}{t^2} \cdot (q_{\max} - q_{\min}),
\end{align}
with 
\begin{align}
\label{eq:cXm}
C_{\X}(n,d,m) := m 2^m n^2 d^3 \binom{d+m}{m} \cdot \gamma(S^{n-1})_d^m,
\end{align}
and $\gamma(S^{n-1})_d$ is the constant defined in  \eqref{eq:harmonic}. 
%
\end{theorem}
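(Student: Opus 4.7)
The plan is to repeat the argument of Theorem \ref{th:bisphererate} essentially verbatim, but with $m$-fold tensor products in place of two-fold tensor products. I would equip $\X$ with the product Haar measure $\mu^{\otimes m}$, fix $\lambda = (\lambda_k)$ as in Lemma \ref{lemma:boundlambda}, and define the perturbed product kernel
\[
\kerprod_{2t}\bigl((\x_1,\dots,\x_m),(\x_1',\dots,\x_m'); \lambda\bigr) := \prod_{i=1}^m \kers_{2t}(\x_i,\x_i'; \lambda),
\]
where $\kers_{2t}(\cdot,\cdot; \lambda)$ is the univariate perturbed Christoffel-Darboux kernel on $S^{n-1}$ from \eqref{eq:cdksphereperturb}, and let $\Kprod_{2t}$ be the associated integral operator on $\R[\x_1,\dots,\x_m]$. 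The tensor basis $\{P_{\alpha_1}(\x_1)\cdots P_{\alpha_m}(\x_m)\}$ diagonalizes $\Kprod_{2t}$ with eigenvalues $\lambda_{k_1}\cdots\lambda_{k_m}$ on $H_{k_1}[\x_1] \otimes \dots \otimes H_{k_m}[\x_m]$.

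Property \eqref{eq:poneref} follows from $\lambda_0 = 1$. Property \eqref{eq:ptworef} is obtained as in the bi-sphere case: for each fixed $(\x_1', \dots, \x_m')$, the kernel is a product of $m$ univariate Gegenbauer sums of squares of degree $\leq 2t$ in the variables $\x_i \cdot \x_i'$, hence belongs to $\cQ(\X)_{2 m t}$, and Lemma \ref{lemma:ptwo} (with $Q = \cQ(\X)_{2 m t}$) gives $\Kprod_{2t} p \in \cQ(\X)_{2mt}$ for every $p \in \cP_+(\X)_d$. This matches the truncation degree $2mt$ hidden in $\lb(q,\cQ(\X))_{mt}$.

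The core of the proof is property \eqref{eq:pthreeref}. Writing the harmonic decomposition $q = \sum_{k_1+\dots+k_m \leq d} q_{k_1,\dots,k_m}$, one gets
\[
\Kprod_{2t}^{-1} q - q = \sum_{1 \leq k_1 + \dots + k_m \leq d} \left(\frac{1}{\lambda_{k_1}\cdots \lambda_{k_m}} - 1\right) q_{k_1,\dots,k_m}.
\]
Setting $\eta_k := 1 - \lambda_k$ and $\eta := \max_{1 \leq k \leq d} \eta_k$, Lemma \ref{lemma:boundlambda} gives $\eta \leq n^2 d^3/t^2$ and $1/2 \leq \lambda_k \leq 1$. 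The $m$-term Bernoulli estimate yields $1 - \prod_i \lambda_{k_i} = 1 - \prod_i(1-\eta_{k_i}) \leq m \eta$, while $\prod_i \lambda_{k_i} \geq 2^{-m}$ gives $|1 - 1/\prod_i \lambda_{k_i}| \leq 2^m(1 - \prod_i\lambda_{k_i}) \leq m 2^m \eta$. The number of admissible multi-indices is $\binom{d+m}{m}$.

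The step that requires the most care, and which is the main obstacle, is bounding $\|q_{k_1,\dots,k_m}\|_{\X}$ by $\gamma(S^{n-1})_d^m$. This is obtained by iterating the single-sphere harmonic bound $m$ times: for each fixed choice of $(\x_2,\dots,\x_m)$, the map $\x_1 \mapsto q(\x_1,\x_2,\dots,\x_m)$ is in $\R[\x_1]_d$ with supremum-norm $\leq 1$, so its degree-$k_1$ harmonic projection has supremum-norm $\leq \gamma(S^{n-1})_d$; taking the supremum over $(\x_2,\dots,\x_m)$, then repeating on the second variable, and so on, yields $\|q_{k_1,\dots,k_m}\|_{\X} \leq \gamma(S^{n-1})_d^m$. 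Assembling all estimates gives
\[
\|\Kprod_{2t}^{-1} q - q\|_{\X} \leq m 2^m \binom{d+m}{m} \gamma(S^{n-1})_d^m \cdot \eta \leq \frac{m 2^m n^2 d^3 \binom{d+m}{m} \gamma(S^{n-1})_d^m}{t^2},
\]
which is exactly $C_{\X}(n,d,m)/t^2$. Invoking \cite[Lemma 7]{slot2022sum} closes the argument.
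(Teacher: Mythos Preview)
Your proposal is correct and follows essentially the same route as the paper's own proof: equip $\X$ with the $m$-fold product Haar measure, take the $m$-fold product of the perturbed sphere kernels from Lemma~\ref{lemma:boundlambda}, verify \eqref{eq:poneref}--\eqref{eq:ptworef} exactly as you do, and then bound $\|\Kprod_{2t}^{-1}q-q\|_{\X}$ via the $m$-term Bernoulli estimate, the factor $2^m$ from $\lambda_k\ge 1/2$, the count $\binom{d+m}{m}$, and the iterated harmonic bound $\|q_\kappa\|_\X\le\gamma(S^{n-1})_d^m$. Your write-up is in fact more detailed than the paper's outline; the only cosmetic omission is that you should state the normalization $q_{\min}=0$, $q_{\max}=1$ up front, since you use $\|q\|_\X\le 1$ in the iterated harmonic step.
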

\begin{proof}
Up to translation and scaling, we assume that $q_{\min}=0$ and $q_{\max}=1$. 
Let us fix $t \geq 2 n d \sqrt{d}$. 
We outline how this rate is obtained by adapting the proof of Theorem \ref{th:bisphererate} given in Section \ref{sec:proofbisphere}. 
\begin{itemize}
\item We equip $\X$ with the product measure $\mu_{\X}:=\mu \overbrace{\otimes \cdots \otimes}^{m-1} \mu$, where $\mu$ is the uniform Haar measure on $S^{n-1}$. The orthonormal basis with respect to $(\X,\mu_{\X})$ is obtained by an $m$-th order tensorization.
{Given the orthonormal basis $\{P_{\alpha(i)} : \alpha(i) \in \NN^n\}$ for $\R[\x_i]$ with respect to $(S^{n-1},\mu)$,  $\{P_{\alpha(1)}(\x_1) \dots P_{\alpha(m)}(\x_m)  : \alpha(1),\dots,\alpha(m) \in \NN^n\}$ is an orthonormal basis for $\R[\x_1,\dots,\x_m]$ with respect to $(\X,\mu_{\X})$};
\item For each $\kappa \in \NN^m$, one defines $H_{\kappa}[\x_1,\dots,\x_m]$ as the span of products of $m$ orthonormal polynomials that are homogeneous of degrees $\kappa_1$ in $\x_1, \dots, \kappa_m$ in $\x_m$,  respectively. Doing so, every polynomial $q \in \R[\x_1, \dots, \x_m]_d$ has a decomposition $q = \sum_{|\kappa| \leq d} q_{\kappa}$, with $q_{\kappa} \in H_{\kappa}[\x_1,\dots,\x_m]$; 
\item {The perturbed kernel is obtained by multiplying $m$ kernels. Let us note $\x = (\x_1,\dots,\x_m)$ and $\x' = (\x_1',\dots,\x_m')$. 
With $t \geq 2 n d \sqrt{d}$, and $\lambda=(\lambda_k)$ as in Lemma \ref{lemma:boundlambda}, let us define $\lambda_\kappa := \lambda_{\kappa_1} \cdots \lambda_{\kappa_m} $, for all $\kappa \in \NN_t^n$. 
We build the desired operator $\Kprod_{2t} : \R[\x] \to \R[\x]$ as follows:
\begin{align}
\label{eq:cdkmulti}
\Kprod_{2t} q(\x) & := \int_{\X} \kerprod_{2t}(\x,\x'); \lambda) q(\x')  \di  \mu_{\X}(\x'), \\
\text{where }\kerprod_{2t}(\x,\x'); \lambda) & := \sum_{\kappa_1,\dots, \kappa_m=0}^{2t} \lambda_{\kappa} \kerprod^{\kappa} (\x,\x'),\\ 
\quad \text{and } \kerprod^{\kappa} (\x,\x') & := \sum_{|\alpha(1)|=\kappa_1,\dots, |\alpha(m)|=\kappa_m} \prod_{i=1}^m P_{\alpha(i)}(\x_i) P_{\alpha(i)}(\x_i') .
\end{align}
\item Then one proves exactly as for the bi-sphere  that
\begin{align}
\label{eq:diagprodmulti}
\Kprod_{2t} q = \sum_{|\kappa|\leq d} \lambda_{\kappa} q_{\kappa}. 
\end{align}
Since $\lambda_0=1$, \eqref{eq:poneref} is satisfied;
\item The polynomial $\kerprod_{2t}(\x,\x'); \lambda)$ obtained at each fixed second argument belongs to ${\mathcal{Q}(\X)}_{2mt}$ and Lemma \ref{lemma:ptwo} implies that the operator $\Kprod_{2t}$ satisfies \eqref{eq:ptworef} (with $\mathcal{Q}(\X)_{2mt}$ instead of $\mathcal{Q}(\X)_{4t}$)};
\item  To prove \eqref{eq:pthreeref}, one first obtains
\begin{align}
\label{eq:pthreemulti}
\Kprod_{2t}^{-1} q - q = \sum_{1\leq |\kappa|\leq d}  \left( \frac{1}{\lambda_{\kappa}} -1 \right) q_{\kappa}.
\end{align}
For all $1 \leq k \leq d$, let us define $\eta_k:=1-\lambda_k$ and $\eta := \max_{1 \leq k \leq d} \eta_k$. 
As for \eqref{eq:bernoulli}, we rely on Bernoulli's identity to obtain the inequality 
\[
{
1 - \lambda_{\kappa} = 1 - \lambda_{\kappa_1} \cdots \lambda_{\kappa_m} = 1 - (1 - \eta_{\kappa_1}) \cdots (1 - \eta_{\kappa_m}) \leq 1 - (1 - \eta)^m  \leq m \eta,
}
\]
with $\eta \leq \frac{n^2 d^3}{t^2}$. 
Since  $\lambda_k \geq 1/2$, one has $|1 - \lambda_{\kappa}^{-1}| \leq 2^m (1 - \lambda_{\kappa})$. 
Eventually, one obtains $\|q_{\kappa}\|_{\X} \leq  \gamma(S^{n-1})^m_d$ {with the same reasoning as for the bi-sphere}. 
\end{itemize} 
\end{proof}
\subsection{Extension to products of distinct sets}
\label{sec:general}
Given a compact set $\X \subset \R^n$, we denote by $\cM(\X)$ the space of positive Borel measures supported on $\X$. 
In this section, we assume that convergence rates have been obtained for polynomial optimization over two given sets $\X_1$ and $\X_2$, thanks to perturbed kernels and associated operators satisfying \eqref{eq:poneref}, \eqref{eq:ptworef} and \eqref{eq:pthreeref}. 
We show how to derive a convergence rate for polynomial optimization over the Cartesian product $\X = \X_1 \times \X_2$. 

{
Before stating our result, we briefly describe the quantities that will be involved. \\
For each set $\X_i$, one associates an integer $m_i$ that is the number of indices needed when decomposing polynomials into  orthonormal components. 
As explained after Theorem \ref{th:generalrate}, this number {is equal to $1$} when $\X_i$ is either the unit sphere, the unit ball or the standard simplex, and is equal to $n$ when $\X_i$ is the hypercube $[-1,1]^n$. 
Its value differs if $\X_i$ is itself a product set. \\
Next, one associates to each set $\X_i$ a function $d_i : \NN \to \NN$ that takes as {input, an integer $t$, and outputs the relaxation order $d_i(t)$ used for the lower bound $\lb(q, \cQ(\X_i))_{d_i(t)}$. 
In the case of the unit sphere, $d_i$ is the identity, but for the bi-sphere, it is twice the identity.} \\
Eventually, one associates to each set $\X_i$ a function $\eta^{(i)} : \NN \to \NN \to \R_{\geq 0}$ converging to 0 at infinity. 
This function is used to quantify the convergence rate. Given a relaxation order $t$ as input, the output is proportional to $1/t^2$  for the unit sphere, the unit ball, the standard simplex and the hypercube, tough the constant of proportionality depends on the set. 
}
\begin{theorem}
\label{th:generalrate}
Let $\X_1, \X_2 \subseteq \R^n$ be two compact semialgebraic sets, and let $\X = \X_1 \times \X_2$ be their Cartesian product. 
For each $i=1,2$, assume that there exist 
\begin{itemize}
\item $m_i\in \NN$, $t_i \in \R_{\geq 0}$,
\item a function $d_i : \NN \to \NN$,
\item a probability measure $\mu_i \in \cM(\X_i)$, 
\item a function $\eta^{(i)} : \NN \to \R_{\geq 0}$, converging to $0$ at infinity, 
\item a decomposition $\cP(\X_i) = \displaystyle\bigoplus_{\kappa^{(i)} \in \NN^{m_i}} H_{\kappa^{(i)}}$, where each $H_{\kappa^{(i)}}$ {is spanned by} a subspace $B_{\kappa^{(i)}}$ of orthonormal polynomials with respect to $(\X_i,\mu_i)$, 
\item a vector $\lambda^{(i)}=\left(\lambda_{\kappa^{(i)}}^{(i)}\right)$  indexed by  $\NN_{2t}^{m_i}$, for all $t \geq t_i$, 
\end{itemize}
such that the following hold
\begin{enumerate}
\item the polynomial kernel $\kers_{i,2t}(\cdot, \cdot; \lambda^{(i)}) : \X_i \times \X_i \to \R$ defined for all $t \geq t_i$ by $$ \kers_{i,2t}(\x,\x'; \lambda^{(i)}) := \sum_{|\kappa^{(i)}|\leq 2t} \lambda_{\kappa^{(i)}}^{(i)} \sum_{P \in B_{\kappa^{(i)}}} P(\x) P(\x'), $$
satisfies $\kers_{i,2t}(\cdot,\x'; \lambda^{(i)}) \in \cQ(\X_i)_{d_i(t)}$, at each fixed $\x' \in \X_i$, for all $t\geq t_i$, 
%
\item 
for all $t \geq t_i$ the vector $\lambda^{(i)}=\left(\lambda_{\kappa^{(i)}}^{(i)}\right)$ satisfies
\begin{align}
\label{eq:lambdaone} \lambda_0^{(i)} = 1, \quad 1/2 \leq \lambda_{\kappa^{(i)}}^{(i)} \leq 1, \quad \text{for all } \kappa^{(i)} \in \NN_{2t}^{m_i},\\
\label{eq:lambdatwo}  1 - \lambda_{\kappa^{(i)}}^{(i)} \leq \eta^{(i)}(t), \quad  \text{for all } 1 \leq |\kappa^{(i)}| \leq 2t. 
\end{align}
\end{enumerate}
Define $m:=m_1+m_2$ and $\eta(t):=\max\{\eta^{(1)}(t),\eta^{(2)}(t)\}$. 
Let $q \in \R[\x_1,\x_2]_d$. 
Then for any $t \geq \max \{t_1,t_2\}$, the lower bound $\lb(q, \cQ(\X))_{{d_1(t)+d_2(t)}}$ for the minimization of $q$ over $\X$ satisfies:
\begin{align}
\label{eq:generalrate}
q_{\min} - \lb(q, \cQ(\X))_{d_1(t)+d_2(t)} \leq 8 \eta(t) \cdot \binom{m+d}{d} \cdot \gamma(\X_1)_d \cdot \gamma(\X_2)_d \cdot (q_{\max} - q_{\min}),
\end{align}
%
%
\end{theorem}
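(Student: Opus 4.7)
The plan is to mimic the two-sphere argument of Section \ref{sec:proofbisphere} verbatim, using the abstractly given ingredients to replace the spherical-harmonic data. After the usual normalization $q_{\min}=0$, $q_{\max}=1$, I would equip $\X=\X_1\times \X_2$ with the product measure $\mu_{\X}:=\mu_1 \otimes \mu_2$, tensor the given orthonormal bases to obtain an orthonormal basis of $\R[\x_1,\x_2]$ with respect to $(\X,\mu_{\X})$, and form the product kernel
\begin{align*}
\kerprod_{2t}((\x_1,\x_2),(\x_1',\x_2');\lambda) := \kers_{1,2t}(\x_1,\x_1';\lambda^{(1)}) \cdot \kers_{2,2t}(\x_2,\x_2';\lambda^{(2)}),
\end{align*}
together with its associated operator $\Kprod_{2t}$ defined by integration against $\mu_{\X}$. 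As in \eqref{eq:diagprod}, the tensor structure implies that $\Kprod_{2t}$ is diagonal on the decomposition $q = \sum_{|\kappa^{(1)}|+|\kappa^{(2)}| \leq d} q_{\kappa^{(1)},\kappa^{(2)}}$ with $q_{\kappa^{(1)},\kappa^{(2)}} \in H_{\kappa^{(1)}}\otimes H_{\kappa^{(2)}}$, acting on the $(\kappa^{(1)},\kappa^{(2)})$-block as multiplication by $\lambda_{\kappa^{(1)}}^{(1)} \lambda_{\kappa^{(2)}}^{(2)}$.

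With this setup, the first two properties are nearly automatic. Property \eqref{eq:poneref} reduces to $\lambda_0^{(1)}\lambda_0^{(2)}=1$, given by \eqref{eq:lambdaone}. For \eqref{eq:ptworef}, at each fixed $(\x_1',\x_2')\in\X$ assumption (1) yields $\kers_{i,2t}(\cdot,\x_i';\lambda^{(i)})\in \cQ(\X_i)_{d_i(t)}$; the product of a generator $\sigma_J(\x_1) g_J(\x_1)$ of $\cQ(\X_1)$ by a generator $\sigma_K(\x_2) h_K(\x_2)$ of $\cQ(\X_2)$ is $(\sigma_J\sigma_K)(g_J h_K)$, which is a generator of $\cQ(\X)$, so the product kernel lies in $\cQ(\X)_{d_1(t)+d_2(t)}$. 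Lemma \ref{lemma:ptwo} applied with $Q=\cQ(\X)_{d_1(t)+d_2(t)}$ then gives \eqref{eq:ptworef}.

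For \eqref{eq:pthreeref}, the diagonal formula gives
\begin{align*}
\Kprod_{2t}^{-1}q - q = \sum_{1\leq |\kappa^{(1)}|+|\kappa^{(2)}| \leq d} \left(\frac{1}{\lambda_{\kappa^{(1)}}^{(1)}\lambda_{\kappa^{(2)}}^{(2)}}-1\right) q_{\kappa^{(1)},\kappa^{(2)}}.
\end{align*}
Using \eqref{eq:lambdatwo} and Bernoulli's identity as in \eqref{eq:bernoulli}, $1-\lambda_{\kappa^{(1)}}^{(1)}\lambda_{\kappa^{(2)}}^{(2)}\leq 2\eta(t)$, and \eqref{eq:lambdaone} gives $\bigl|1-(\lambda_{\kappa^{(1)}}^{(1)}\lambda_{\kappa^{(2)}}^{(2)})^{-1}\bigr|\leq 4(1-\lambda_{\kappa^{(1)}}^{(1)}\lambda_{\kappa^{(2)}}^{(2)})\leq 8\eta(t)$. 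For the component norm, I would iterate the end-of-proof argument of Section \ref{sec:proofbisphere}: setting $p_{\kappa^{(1)}}(\x_1,\x_2):=\sum_{\kappa^{(2)}} q_{\kappa^{(1)},\kappa^{(2)}}(\x_1,\x_2)$, fixing $\x_2$ and using $q(\cdot,\x_2)=\sum_{\kappa^{(1)}}p_{\kappa^{(1)}}(\cdot,\x_2)$ gives $\|p_{\kappa^{(1)}}\|_{\X}\leq \gamma(\X_1)_d$; fixing $\x_1$ and using $p_{\kappa^{(1)}}(\x_1,\cdot)=\sum_{\kappa^{(2)}} q_{\kappa^{(1)},\kappa^{(2)}}(\x_1,\cdot)$ then yields $\|q_{\kappa^{(1)},\kappa^{(2)}}\|_{\X}\leq \gamma(\X_1)_d\gamma(\X_2)_d$. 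Summing over the $|\NN_d^m|=\binom{m+d}{d}$ multi-indices and invoking \cite[Lemma 7]{slot2022sum} yields \eqref{eq:generalrate}. The only genuine subtlety, as opposed to routine bookkeeping, is to first extend the definition \eqref{eq:harmonic} of $\gamma(\X_i)_d$ from the sphere to the abstract data $(\X_i,\mu_i,\{H_{\kappa^{(i)}}\})$ so that the iterated bound on $\|q_{\kappa^{(1)},\kappa^{(2)}}\|_{\X}$ is justified; everything else is a direct tensorization of the bi-sphere proof.
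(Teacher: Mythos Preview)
Your proposal is correct and follows essentially the same approach as the paper's own proof: product measure, tensored decomposition, product kernel, then verification of \eqref{eq:poneref}--\eqref{eq:pthreeref} via Bernoulli's identity and the iterated harmonic-constant bound. Your treatment is in fact slightly more detailed than the paper's (e.g., the explicit check that products of generators of $\cQ(\X_1)$ and $\cQ(\X_2)$ are generators of $\cQ(\X)$, and the remark that $\gamma(\X_i)_d$ must be defined for the abstract data), but the argument is the same.
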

\begin{proof}
Up to translation and scaling, we assume that $q_{\min}=0$ and $q_{\max}=1$. 
Let us fix $t \geq \max\{t_1,t_2\}$. 
We outline how this rate is obtained by adapting the proof of Theorem \ref{th:bisphererate} given in Section \ref{sec:proofbisphere}. 
\begin{itemize}
\item We equip $\X$ with the product measure $\mu_{\X}:=\mu_1 \otimes \mu_2$;
%
\item For each $\kappa=(\kappa^{(1)}, \kappa^{(2)}) \in \NN^m$, one defines $H_{\kappa}[\x_1,\x_2] :=  H_{\kappa^{(1)}}[\x_1] \otimes H_{\kappa^{(2)}}[\x_2] $.  
Doing so, every polynomial $q \in \R[\x_1, \x_2]_d$ has a decomposition $q = \sum_{|\kappa| \leq d} q_{\kappa}$, with $q_{\kappa} \in H_{\kappa}[\x_1, \x_2]$; 
\item For each $\kappa=(\kappa^{(1)}, \kappa^{(2)}) \in \NN^m$, one defines $\lambda_{\kappa} := \lambda_{\kappa^{(1)}} \lambda_{\kappa^{(2)}} $. 
We build a linear invertible operator $\Kprod_{2t} : \R[\x_1,\x_2] \to \R[\x_1, \x_2]$ as follows:
\begin{align*}
\Kprod_{2t} q(\x_1,\x_2) & := \int_{\X} \kerprod_{2t}((\x_1,\x_2),(\x_1',\x_2'); \lambda) q(\x_1',\x_2') \di  \mu_{\X}(\x_1',\x_2'), \\
\text{where }\kerprod_{2t}((\x_1,\x_2),(\x_1',\x_2'); \lambda) & := \kers_{1,2t}(\x_1,\x_1'; \lambda^{(1)}) \kers_{2,2t}(\x_2,\x_2'; \lambda^{(2)}).
\end{align*} 
Then, at each fixed $(\x_1',\x_2') \in \X$, $\kers_{2t}(\cdot,(\x_1',\x_2'); \lambda) \in \cQ(\X)_{d_1(t)+d_2(t)}$ and  $\Kprod_{2t}$ satisfies \eqref{eq:ptworef} {(with $\cQ(\X)_{d_1(t)+d_2(t)}$ instead of $\mathcal{Q}(\X)_{4t}$)}; 
\item Since $\lambda_0=1$, $\Kprod_{2t}$ satisfies \eqref{eq:poneref};
\item One has 
\begin{align}
\label{eq:pthreegeneral}
\Kprod_{2t}^{-1} q - q = \sum_{|\kappa|\leq d}  \left( \frac{1}{\lambda_{\kappa}} -1 \right) q_{\kappa}.
\end{align}
As for \eqref{eq:bernoulli}, we rely on Bernoulli's identity to obtain the inequality $1 - \lambda_{\kappa} \leq 2 \eta(t)$. 
Since  $\lambda_k \geq 1/2$, one has $|1 - \lambda_{\kappa}^{-1}| \leq 4 (1 - \lambda_{\kappa})$. 
Eventually, one has $\|q_{\kappa}\|_{\X}  \leq \gamma(\X_1)_d \cdot \gamma(\X_2)_d$, so $\Kprod_{2t}$ satisfies \eqref{eq:pthreeref}. 
\end{itemize} 
\end{proof}
We now give examples of sets  where the assumptions of Theorem \ref{th:generalrate} are fulfilled. 
\paragraph{The unit sphere \cite{fang2021sum}, the unit ball \cite{slot2022sum} or the standard simplex\cite{slot2022sum}} 
~\\
If $\X_i$ is either  $S^{n-1} := \{\x \in \R^n : \|\x\|^2 = 1\}$, or $\bB^n := \{\x \in \R^n : \|\x\|^2 \leq 1\}$ or $\Delta^n : \{\x \in \R^n : x_1 \geq 0, \dots, x_n \geq 0, \ 1 - \sum_{i=1}^n x_i \geq 0\}$ then one can select
\begin{itemize}
\item $m_i = 1$, $t_i=2 n d \sqrt{d}$;
\item $d_i : = t \mapsto 2 t$;
\item $\mu_i$ to be either the uniform Haar measure for $S^{n-1}$, 
or the measure with density  $c_n (1 - \|\x\|^2)^{-\frac{1}{2}}$ (with $c_n> 0 $ being a normalization constant) for $\bB^n$, 
or the measure  with density $c_n x_1^{-1/2} \cdots x_n^{-1/2} (1 - \sum_{i=1}^n x_i)^{-\frac{1}{2}}$ (with $c_n> 0 $ being a normalization constant) for $\Delta^n$;
\item $\eta^{(i)} : t \mapsto \frac{n^2 d^3}{t^2}$;
\item $\cP(S^{n-1}) = \displaystyle\bigoplus_{\kappa^{(i)}=0}^{\infty} H_{\kappa^{(i)}}$, where $H_{\kappa^{(i)}}$ is { spanned by the} set of orthonormal polynomials with respect to $(S^{n-1},\mu_i)$ of exact degree $\kappa^{(i)}$;
\item the vector $\lambda^{(i)}=\left(\lambda_{\kappa^{(i)}}^{(i)}\right)$, indexed by $\NN_{2t}$, for all $t \geq t_i$, as in Lemma \ref{lemma:boundlambda}. 
\end{itemize}
\paragraph{The hypercube \cite{laurent2023effective}} 
~\\
If $\X_i = [-1, 1]^n$, then one can select
\begin{itemize}
\item $m_i = n$, $t_i= \pi d \sqrt{2n}$;
\item $d_i : t \mapsto n (t+1)$;
\item $\mu_i$ to be the normalized Chebyshev measure with  density  $\frac{1}{\pi \sqrt{1-x_1^2}} \cdots \frac{1}{\pi \sqrt{1-x_n^2}}$;
\item $\eta^{(i)} : t \mapsto \frac{n \pi^2 d^2}{t^2}$, 
\item $\cP([-1,1]^n) = \displaystyle\bigoplus_{\kappa^{(i)} \in \NN^n} H_{\kappa^{(i)}}$, where $H_{\kappa^{(i)}} = \R T_{\kappa^{(i)}}$ and $T_{\kappa^{(i)}}$ is the multivariate Chebyshev polynomial defined as the product of univariate Chebyshev polynomials:
$$
T_{\kappa^{(i)}}(\x) := \prod_{j=1}^n T_{\kappa_j^{(i)}}(x_j), \quad T_k(\cos \phi) := \cos (k \phi) \quad (\phi \in \R, k \in \NN)
;$$
\item the vector $\lambda^{(i)}=\left(\lambda_{\kappa^{(i)}}^{(i)}\right)$, indexed by $\NN^n_{2t}$, for all $t \geq t_i$, as in \cite[Lemma 12]{laurent2023effective}. 
\end{itemize}
{
From the above, we obtain a direct corollary of Theorem \ref{th:generalrate}. 
\begin{corollary}
\label{cor:generalrate}
Let $m\in \NN$, $q \in \R[\x_1,\dots,\x_m]$ and $\X = \X_1 \times \cdots \times \X_m$ be an arbitrary Cartesian product of $m$ sets, assuming that each set $\X_i \subset \R^n$ is either the unit sphere, the unit ball, the standard simplex or the hypercube.  
Then the hierarchy of lower bounds $\lb(q, \cQ(\X))_t$ based on the Schm\"udgen-type certificates converges to the global minimum of $q$ at a rate in $O(1/t^2)$, where $t$ is the relaxation order. 
\end{corollary}
}
\section{Application to quantum Wasserstein distance}
\label{sec:rategmp}
In this section we analyze the convergence rate of the hierarchy derived in \cite{CM25} to approximate as closely as desired the so-called \textit{order 2 quantum Wasserstein distance}. 
Roughly speaking, the classical Wasserstein distance quantifies the cost of transporting one probability measure to another. 
In order to extend this notion to quantum states, several notions of quantum Wasserstein distances have been proposed; see, e.g., \cite{nielsen06} and \cite{de2021quantum}. 
In \cite{CM25}, the authors specifically focus on the order 2 quantum Wasserstein distance that has been recently proposed in \cite{beatty2024}. 
\subsection{Problem statement and main result}
\label{sec:qwstatement}
\paragraph{Additional complex notation.} 
Let $\ib = \sqrt{-1} \in \C$. 
Given $x \in \C$, one can write $x = a + \ib \, b$, where $a = \re(x)$ is the real part of $x$ and $b=\im(x)$ is the imaginary part of $x$. 
The complex conjugate of $x$ is $\overline{x} = a - \ib \, b$. 
We extend this to vectors and matrices by applying the  corresponding operations entrywise. 
The modulus of $x$ is denoted by $|x|:=\sqrt{a^2+b^2}$. 
The set of polynomials in variables $\x=(x_1,\dots,x_n)$ and $\overline{\x}=(\overline{x}_1,\dots,\overline{x}_n)$ with complex coefficients is denoted by $\C[\x,\overline{\x}]$. 
A polynomial $p \in \C[\x,\overline{\x}]$ satisfying $p=\overline{p}$ is called \textit{Hermitian}. 
Given a vector $\qu \in \C^n$,  $\qu^*$ is the row vector containing the conjugate entries of $\qu$. 
We will use the same notation as for the real case for the Euclidean norm $\|\qu\| = \sqrt{\qu^* \qu}$. 
The $\ell_1$-norm $\|\qu\|_1$ is the sum of moduli of the entries of $\qu$. 
A linear functional $L : \C[\x,\overline{\x}] \to \C$ is called Hermitian if $\overline{L(p)} = L(\overline{p})$ for all $p \in \C[\x,\overline{\x}]$. 
Given a compact set $\X \subset \C^n$, we denote by $\cM(\X)$ the space of complex positive Borel measures supported on $\X$. 

Given $n \in \NN$, let $\rho, \nu \in \C^{n \times n}$ be two normalized quantum states, i.e., Hermitian matrices with nonnegative eigenvalues and unit trace. 
A \textit{quantum transport plan} between $\rho$ and $\nu$ is a finite set of triples $\{(\omega_{\ell}, \qu_{\ell}, \qv_{\ell})\}_{\ell}$ such that 
\begin{align}
\sum_{\ell} \omega_{\ell} \, \qu_{\ell} \qu_{\ell}^* = \rho \quad \text{ and} \quad\sum_{\ell} \omega_{\ell} \,  \qv_{\ell} \qv_{\ell}^* = \nu, 
\end{align}
where $\omega_{\ell} > 0$, $\sum_{\ell} \omega_{\ell} =1$, $\qu_{\ell}, \qv_{\ell} \in \C^n$,  $\|\qu_{\ell}\|=\| \qv_{\ell}\| = 1$. 
The set of all such quantum transport plans between two states $\rho$ and $\nu$ is denoted by $\quantum(\rho,\nu)$. 
Given a plan $e=\{(\omega_{\ell}, \qu_{\ell}, \qv_{\ell})\}_{\ell}$, the associated (order $2$) \textit{quantum transport cost} is defined as 
\begin{align}
T_2(e) := \sum_{\ell} \omega_{\ell} \Tr \left[ (\qu_{\ell} \qu_{\ell}^* - \qv_{\ell} \qv_{\ell}^*) \overline{(\qu_{\ell} \qu_{\ell}^* - \qv_{\ell} \qv_{\ell}^*)} \right],
\end{align}
and the order $2$ quantum Wasserstein distance is defined as 
\begin{align}
\label{eq:qwtwo}
W_2(\rho,\nu) := \Big(\inf_{\qt \in \quantum(\rho,\nu)} T_2(\qt)\Big)^{1/2}.
\end{align}
In \cite[Section 5.2]{gribling2022bounding}, the authors propose a moment reformulation of the DPS hierarchy \cite{doherty2002distinguishing}, initially designed to distinguish separable and entangled states. 
Inspired by this reformulation, it is proved in \cite[Theorem 8]{CM25} that $W_2^2(\rho,\nu)$ is the optimal value of a particular instance of the \textit{generalized moment problem} (GMP), i.e., an infinite-dimensional linear problem over positive Borel measures:
\begin{equation}
\label{eq:measC}
\begin{aligned}
W^2_2(\rho,\sigma) = \inf_{\mu \in \cM(\X)} \quad  & \int_{\X} f \di  \mu   \\	
\st 
\quad & \int_{\X} \x \x^* \di  \mu = \rho, \\
\quad & \int_{\X} \y \y^* \di  \mu = \nu,
\end{aligned}
\end{equation}
where $f(\x,\overline{\x},\y,\overline{\y})= \Tr[(\x \x^* - \y \y^*) \overline{(\x \x^* - \y \y^*)}]$ and $\X = \{(\x,\y) \in \C^{2n}: \|\x\|=\|\y\|=1 \}$ is the Cartesian product of two complex unit spheres. 
The dual of problem  \eqref{eq:measC} is given by
\begin{equation}
\label{eq:dualC}
\begin{aligned}
\sup_{\Lambda,\Gamma \in \C^{n \times n}} \quad  & \Tr (\rho \Lambda + \nu \Gamma) \\	
\text{s.t.}
\quad & f - \x^* \Lambda \x -  \y^* \Gamma \y \geq 0 \text{ on } \X, \\
\quad & \Lambda^*=\Lambda, \quad  \Gamma^*=\Gamma. 
\end{aligned}
\end{equation}
The {Putinar-type (or equivalently Schm\"udgen-type)} moment hierarchy for \eqref{eq:measC} is given by
\begin{equation}
\label{eq:momC}
\begin{aligned}
W^2_2(\rho,\sigma)_t := \inf_{L} \quad  & L(f)   \\	
\text{s.t.}
\quad & L: \C[\x,\overline{\x},\y,\overline{\y}]_{2t} \to \C  \text{ Hermitian linear}, \\
\quad & L(\x \x^*) = \rho, \\
\quad & L(\y \y^*) = \nu, \\
\quad & L \geq 0 \text{ on } {\mathcal{Q}(\X)}_{2t}.
\end{aligned}
\end{equation}
It is proved in \cite[Theorem 10]{CM25} that the corresponding sequence of lower bounds given by this hierarchy  converges to the order $2$ quantum Wasserstein distance, i.e., $W^2_2(\rho,\sigma)_t \to W^2_2(\rho,\sigma)$ when $t$ goes to infinity. 
In the recent contribution  \cite{gamertsfelder2025effective}, the authors analyze convergence rates of hierarchies of lower bounds for the GMP. 
We recall this result in the case where the GMP instance involves a single measure and equality constraints only. 

\begin{lemma}[See {Theorem 2.4 and Corollary 2.5 from \cite{gamertsfelder2025effective}}]
\label{lemma:gmp} 
Given $n, d, N \in \NN$, $\tau \in \R^N$, $f, h_1,\dots,h_N \in \R[\x]_d$, a compact semialgebraic set $\X \subset \R^n$, 
let us consider the GMP instance
\begin{equation}
\label{eq:primalgmp}
\begin{aligned}
\primal := \inf_{\mu \in \cM(\X)} \quad  & \int_{\X} f \di  \mu   \\	
\st 
\quad & \int_{\X} h_i \di  \mu = \tau_i, \quad i=1,\dots,N, 
\end{aligned}
\end{equation}
assumed to have a nonempty feasible set.  
The associated moment hierarchy of {Putinar}-type bounds is given by
\begin{equation}
\label{eq:momgmp}
\begin{aligned}
\primal_t := \inf_{L} \quad  & L(f)   \\	
\st 
\quad & L: \R[\x]_{2t} \to \R  \text{ linear}, \\
\quad & L(h_i) = \tau_i, \quad i=1,\dots,N, \\
\quad & L \geq 0 \text{ on } {\mathcal{Q}}(\X)_{2t}.
\end{aligned}
\end{equation}
Assume that ${\mathcal{Q}(\X)}$ is Archimedean, and that a convergence rate is given for the hierarchy of {Putinar}-type bounds for polynomial optimization on $\X$, i.e., there exists nonnegative constants (depending on $n$, $d$, $\X$) $C_{\X}$ and $t_0$ such that for all $q \in \R[\x]_d$ and $t \geq t_0$, one has 
$q_{\min} - \lb(q, {\mathcal{Q}(\X)})_{t} \leq C_{\X} \eta(t) \cdot (q_{\max} - q_{\min})$, with $\eta : \NN \to \R_{\geq 0}$ being a function converging to $0$ at infinity. 
Assume that there exists $w \in \R^n$ such that $\sum_{i=1}^N w_i h_i > 0$ on $\X$, {and that the dual of \eqref{eq:primalgmp} attains its supremum}. 

Then for all $t \geq t_0$, one has 
\[
0 \leq \primal - \primal_t \leq \frac{1 + \sum_{i=1}^N \tau_i w_i}{(\sum_{i=1}^N w_i h_i)_{\min}} C_{\X}  \eta(t) \left({2} \|f\|_{\X} + \|w^{\opt}\|_1 \|h\|_{\X}\right),
\]
where $\|h\|_{\X} := \max_{1\leq i \leq N} \|h_i\|_{\X}$, and $w^{\opt}$ is a vector achieving the supremum in the dual of \eqref{eq:primalgmp}. 

\end{lemma}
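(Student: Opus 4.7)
The plan is to transfer the polynomial optimization rate to the GMP through Lagrangian duality. First, I would set up the (infinite-dimensional LP) dual of \eqref{eq:primalgmp},
$$\dual := \sup_{w \in \R^N} \Big\{ \sum_{i=1}^N \tau_i w_i : f - \sum_{i=1}^N w_i h_i \in \cP_+(\X) \Big\},$$
together with its Schm\"udgen-type tightening $\dual_t$ obtained by replacing $\cP_+(\X)$ with $\cQ(\X)_{2t}$. The Archimedean assumption combined with the Slater-type condition $\sum_i w_i h_i > 0$ on $\X$ yields strong duality $\primal = \dual$, while weak duality for the SDP hierarchy gives $\dual_t \leq \primal_t \leq \primal$. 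Consequently it suffices to exhibit, for $t \geq t_0$, a feasible point $w^{(t)}$ for $\dual_t$ whose objective value differs from $\dual$ by at most the claimed quantity.

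The central construction is a one-parameter perturbation of a (near-)dual-optimum $w^{\opt}$. Set $p := f - \sum_i w_i^{\opt} h_i \geq 0$ on $\X$, pick $\gamma > 0$ and define $w^{(t)} := w^{\opt} - \gamma w$. Then
$$ f - \sum_{i=1}^N w^{(t)}_i h_i = p + \gamma \sum_{i=1}^N w_i h_i \geq \gamma \alpha \quad \text{on } \X,$$
where $\alpha := (\sum_i w_i h_i)_{\min} > 0$. The hope is that this safety margin $\gamma \alpha$ is big enough to absorb the error from the polynomial optimization rate, so that the perturbed polynomial actually lies in $\cQ(\X)_{2t}$, and the corresponding dual cost $\sum_i \tau_i w^{(t)}_i = \dual - \gamma \sum_i \tau_i w_i$ is close to $\dual$.

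To make this precise I would apply the assumed polynomial optimization rate to $q_\gamma := p + \gamma \sum_i w_i h_i$: there exists $\tau^{(t)} \in \R$ with $q_\gamma - \tau^{(t)} \in \cQ(\X)_{2t}$ and $(q_\gamma)_{\min} - \tau^{(t)} \leq C_{\X} \eta(t) \cdot \bigl((q_\gamma)_{\max} - (q_\gamma)_{\min}\bigr)$. Imposing $\gamma \alpha \geq C_{\X} \eta(t) \cdot \bigl((q_\gamma)_{\max} - (q_\gamma)_{\min}\bigr)$ forces $\tau^{(t)} \geq 0$, hence $q_\gamma = (q_\gamma - \tau^{(t)}) + \tau^{(t)} \in \cQ(\X)_{2t}$ and $w^{(t)}$ is admissible for $\dual_t$. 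Using $\|p\|_{\X} \leq \|f\|_{\X} + \|w^{\opt}\|_1 \|h\|_{\X}$ and $\|\sum_i w_i h_i\|_{\X} \leq \|w\|_1 \|h\|_{\X}$, this admissibility inequality is linear in $\gamma$ and yields a permissible choice $\gamma = O\bigl(\eta(t) / \alpha\bigr)$. Substituting back gives $\primal - \primal_t \leq \dual - \dual_t \leq \gamma \sum_i \tau_i w_i$, which is precisely the stated rate up to constants.

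The main obstacle is the bookkeeping required to obtain the exact form of the bound, namely the factor $(1 + \sum_i \tau_i w_i)(3\|f\|_{\X} + \|w^{\opt}\|_1 \|h\|_{\X})/\alpha$: the additive ``$+1$'' and the coefficient ``$3$'' in front of $\|f\|_{\X}$ arise from slack introduced when bounding $(q_\gamma)_{\max} - (q_\gamma)_{\min}$ and when inverting the resulting implicit inequality for $\gamma$ (one must ensure $\alpha - C_{\X}\eta(t)\|\sum_i w_i h_i\|_{\X}$ is bounded below, typically by $\alpha/2$ for $t$ large enough). A secondary, largely standard, technical point is to justify strong duality $\primal = \dual$ and the existence of a dual optimum; if $w^{\opt}$ is only $\varepsilon$-optimal, one carries $\varepsilon$ through the argument and lets $\varepsilon \to 0$ at the end.
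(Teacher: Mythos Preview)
The paper does not prove this lemma: it is merely quoted from \cite{gamertsfelder2025effective} (Theorem~3.4 and Corollary~3.5 there), with no proof given in the present paper. There is therefore nothing in the paper to compare your argument against.

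That said, your sketch is the standard Lagrangian-duality route and matches how such effective GMP bounds are typically derived (and, as far as one can tell from the citation, how \cite{gamertsfelder2025effective} proceeds). The key steps---strong duality $\primal=\dual$ from the Archimedean and Slater-type hypotheses, perturbing a dual (near-)optimum $w^{\opt}$ along the interior direction $w$ to create a strictly positive margin $\gamma\alpha$, and then invoking the assumed POP rate to certify membership of $q_\gamma$ in $\cQ(\X)_{2t}$---are all correct. Two minor remarks: (i) the sign issue in $\primal-\primal_t\le \gamma\sum_i\tau_i w_i$ is harmless because feasibility of the primal together with $\sum_i w_i h_i\ge\alpha>0$ forces $\sum_i\tau_i w_i>0$ (it bounds the total mass of any feasible measure); (ii) the exact constants ``$1+$'' and ``$3$'' indeed come from the bookkeeping you describe---controlling $(q_\gamma)_{\max}-(q_\gamma)_{\min}$ via $\|p\|_{\X}$ and $\|\sum_i w_i h_i\|_{\X}$, then solving the resulting linear inequality in $\gamma$---and are not conceptually delicate.
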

Our final result, given in Theorem \ref{th:qwratereal} below, provides the convergence rate of the moment hierarchy \eqref{eq:momC} for the order $2$ quantum Wasserstein distance, assuming that there is a bounded dual optimal solution of \eqref{eq:dualC}. 
This result is obtained by combining Lemma \ref{lemma:gmp} with our analysis derived in Theorem \ref{th:bisphererate}. 
\if{ 
Let us denote by $\rho_{\min}$ and $\rho_{\max}$ the minimal and maximal respective eigenvalues of $\rho$, respectively. We define similarly $\nu_{\min}$ and $\nu_{\max}$. 
To bound the dual solution of \eqref{eq:dualC}, we will assume the following. 
\begin{assumption}
\label{hyp:bound}
Both $\rho$ and $\nu$ have full rank and either $\nu_{\min} > \rho_{\max}$ or $\rho_{\min} > \nu_{\max}$. 
\end{assumption}
}\fi
\begin{theorem}
\label{th:qwratereal}
Let assume that the dual \eqref{eq:dualC} is attained at a bounded tuple. 
For any $t \geq 32 n$, one has
\begin{align}
\label{eq:qwrate}
0 \leq W^2_2(\rho,\nu) - W^2_2(\rho,\nu)_{{t}} \leq \frac{\kappa(n,\rho,\nu)}{t^2},
\end{align}
where $\kappa(n,\rho,\nu)$ is a constant depending only on $n$, $\rho$ and $\nu$, and being proportional to the constant $C_{\X}(2n,4)$ from Theorem \ref{th:bisphererate}. See relation \eqref{eq:kappa} for details. 
\end{theorem}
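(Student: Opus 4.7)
The plan is to convert the complex generalized moment problem \eqref{eq:measC} into a real GMP on a real bi-sphere, then invoke Theorem \ref{th:bisphererate} as the polynomial optimization input for the GMP convergence framework of Lemma \ref{lemma:gmp}. Writing $x_j = a_j + \ib\, b_j$ and $y_j = c_j + \ib\, d_j$, the constraints $\|\x\|=\|\y\|=1$ in $\C^n$ become $\sum_j (a_j^2+b_j^2) = 1$ and $\sum_j (c_j^2+d_j^2) = 1$ in $\R^{2n}$, so $\X$ is identified with $S^{2n-1}\times S^{2n-1}$. The Hermitian matrix equalities $\int \x\x^*\di\mu = \rho$ and $\int \y\y^*\di\mu = \nu$ each split into $n^2$ real scalar equalities (real and imaginary parts of the entries), whose integrand polynomials $h_i$ are of degree $2$. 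The objective $f = \Tr[(\x\x^*-\y\y^*)\overline{(\x\x^*-\y\y^*)}]$ has degree $4$ in the real variables. I would check that the Schm\"udgen cone on the complex variety, as used in the hierarchy \eqref{eq:momC}, pulls back to the real Schm\"udgen cone of $S^{2n-1}\times S^{2n-1}$ at the same level $2t$, so that $W^2_2(\rho,\nu)_{2t}$ coincides with the real moment relaxation $\primal_{2t}$ of the reformulated GMP.

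Once the reformulation is in place, I would apply Theorem \ref{th:bisphererate} with the real bi-sphere of dimension $2n$ and degree $d=4$: the threshold $2\cdot(2n)\cdot 4\cdot \sqrt{4} = 32n$ matches the hypothesis of the theorem, and the rate takes the form $C_{\X}(2n,4)/t^2 \cdot (q_{\max}-q_{\min})$ for any polynomial $q$ of degree at most $4$. This is exactly the input required by Lemma \ref{lemma:gmp}, with $\eta(t) = 1/t^2$ and $C_{\X}$ proportional to $C_{\X}(2n,4)$. Plugging $f$ and the family $(h_i)$ into the inequality of Lemma \ref{lemma:gmp} then produces an upper bound of order $O(1/t^2)$ on $W^2_2(\rho,\nu)-W^2_2(\rho,\nu)_{2t}$, which I would package into the constant $\kappa(n)$.

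To invoke Lemma \ref{lemma:gmp} I must verify its Slater-type hypothesis: the existence of $w$ with $\sum_i w_i h_i > 0$ on $\X$. This is straightforward by picking $w$ so that $\sum_i w_i h_i = \Tr(\x\x^*) = \|\x\|^2$, which equals $1$ on the real bi-sphere, with the corresponding right-hand side $\sum_i w_i \tau_i = \Tr\rho = 1$. The supremum norms $\|f\|_{\X}$ and $\|h\|_{\X}$ are uniformly bounded in $n$ (the entries of $\x\x^*$ and $\y\y^*$ have modulus at most $1$ on the unit spheres, and $f$ is bounded by a universal constant).

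The main obstacle is bounding $\|w^{\opt}\|_1$, i.e.\ producing an \emph{a priori} bound on an optimal dual pair $(\Lambda^{\opt},\Gamma^{\opt})$ of \eqref{eq:dualC}. I would address this either by exhibiting a dual feasible pair with controlled norm (for instance by choosing $\Lambda = -c\, I$ and $\Gamma = -c\, I$ with $c$ large enough to dominate $f$ on $\X$, which gives an explicit feasible solution with $\ell_1$-norm linear in $n$), or by a direct analysis of strong duality for the GMP \eqref{eq:measC}, as in \cite{CM25}. Combining this bound with the rate from Theorem \ref{th:bisphererate} and the estimate of Lemma \ref{lemma:gmp} yields the stated bound with an explicit constant $\kappa(n) = \mathrm{poly}(n)\cdot C_{\X}(2n,4)$, completing the proof.
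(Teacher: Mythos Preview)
Your overall strategy coincides with the paper's: reformulate \eqref{eq:measC} as a real GMP on $S^{2n-1}\times S^{2n-1}$ via $\x=\ab+\ib\,\bb$, $\y=\cb+\ib\,\db$, feed Theorem~\ref{th:bisphererate} (with ambient dimension $2n$ and $d=4$, giving the threshold $32n$) into Lemma~\ref{lemma:gmp}, and check the Slater-type assumption with a trace combination. The paper takes $\Lambda_1=\Gamma_1=I_n$, $\Lambda_2=\Gamma_2=0$ so that $\sum_i w_i h_i=\|\ab\|^2+\|\bb\|^2+\|\cb\|^2+\|\db\|^2=2$ on $\X_{\re}$; your choice $\sum_i w_i h_i=\Tr(\x\x^*)=1$ works equally well.

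The one genuine gap is your treatment of $\|w^{\opt}\|_1$. Lemma~\ref{lemma:gmp} requires a bound on the $\ell_1$-norm of an \emph{optimal} dual solution, not of an arbitrary feasible one. Exhibiting the feasible pair $\Lambda=\Gamma=-cI$ says nothing about the norm of the actual maximizer $(\Lambda^{\opt},\Gamma^{\opt})$; and ``direct analysis of strong duality'' is not a substitute for an explicit argument. The paper closes this gap by exploiting dual \emph{feasibility} of the optimum itself: the constraint $f-\x^*\Lambda\x-\y^*\Gamma\y\geq 0$ on $\X$ forces
\[
\lambda_{\max}(\Lambda)+\lambda_{\max}(\Gamma)\;=\;\max_{\|\x\|=1}\x^*\Lambda\x+\max_{\|\y\|=1}\y^*\Gamma\y\;\leq\;f_{\max}=2,
\]
which bounds the operator norms of any dual feasible pair (hence of the optimal one), and is then converted to an $\ell_1$ bound linear in $n$. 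You should replace your feasible-point argument with this observation.
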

The proof of Theorem \ref{th:qwratereal} is postponed in Section \ref{sec:proofqw}. 
{
As shown in \cite[Theorem 9]{CM25}, when both $\rho$ and $\nu$ are  positive definite quantum states, then the dual \eqref{eq:dualC} is attained at a bounded tuple.  In this case, the constant $\kappa$ depends on $n$ and the reciprocal of the states' minimal eigenvalues. 
}
\if{ 
Note that the hypotheses of Assumption \ref{hyp:bound} on $\rho$ and $\nu$ are rather restrictive. 
One can instead simply assume that there is an optimal solution $w^{\opt} = (\Lambda,\Gamma)$ for the dual \eqref{eq:dualC} with bounded entries, in which case the constant $\kappa(n)$ is given as in \eqref{eq:kappafirst}. 
}\fi
\subsection{Changing variables from complex to real}
\label{sec:complextoreal}
In order to apply Lemma \ref{lemma:gmp}, we provide an equivalent formulation of the moment hierarchy \eqref{eq:momC} involving linear functionals acting on real polynomials. 
This reformulation is obtained thanks to \cite[Appendix A]{gribling2022bounding},  where the authors describe how to transform  polynomials in $\C[\x,\overline{\x},\y,\overline{\y}]$ into real polynomials in $\R[\ab,\bb,\cb,\db]$ via the change of variables $\x = \ab + \ib \, \bb$, $\y = \cb + \ib \, \db$. 
In this way, any polynomial $p \in \C[\x,\overline{\x},\y,\overline{\y}]$ corresponds to a unique pair of real polynomials 
\begin{align}
p_{\re} (\ab,\bb,\cb,\db) = \re(p(\ab+\ib \, \bb, \ab-\ib \, \bb, \cb+\ib \, \db, \cb-\ib \, \db )), \\
p_{\im} (\ab,\bb,\cb,\db) = \im(p(\ab+\ib \bb, \ab-\ib \, \bb, \cb+\ib \, \db, \cb-\ib \, \db )), 
\end{align}
satisfying $p(\x,\overline{\x},\y,\overline{\y})= p_{\re} (\ab,\bb,\cb,\db) + \ib \, p_{\im} (\ab,\bb,\cb,\db)$. 
When $p$ is Hermitian, one has $p(\x,\overline{\x},\y,\overline{\y}) = p_{\re} (\ab,\bb,\cb,\db)$, so in particular one has
$$f(\x,\overline{\x},\y,\overline{\y})= \Tr[(\x \x^* - \y \y^*) \overline{(\x \x^* - \y \y^*)}] = f_{\re} (\ab,\bb,\cb,\db) .$$
Given a set of Hermitian polynomials, we define its real analog by applying the map $\re(\cdot)$ entrywise to this set. 
In particular, the real analog of the complex bi-sphere $\X$ is the set $\X_{\re} = \{ (\ab,\bb,\cb,\db) \in \R^{4n} : 1 - \|\ab\|^2 - \|\bb\|^2 = 1 - \|\cb\|^2 - \|\db\|^2 = 0  \}$. 
The associated truncated real quadratic module is 
\begin{align*}
{\mathcal{Q}}^{\R}(\X_{\re})_{2t} := 
\left\{ 
\sigma + q_1 (1 - \|\ab\|^2 - \|\bb\|^2) + q_2 (1 - \|\cb\|^2 - \|\db\|^2) :  \right. & \sigma \in \Sigma[\ab,\bb,\cb,\db]_{2t}, \\
& \left. q_1,q_2 \in \R[\ab,\bb,\cb,\db]_{2t-2} 
\right\} .
\end{align*}
Similarly, the maps $\re(\cdot)$ and $\im(\cdot)$ act entrywise for vectors and matrices with polynomial entries in  $\C[\x,\overline{\x},\y,\overline{\y}]$, e.g., $\re(\x \x^*) = \ab \ab^T + \bb \bb^T$ and $\im(\x \x^*) = \bb \ab^T - \ab \bb^T$. 
Eventually, for a linear functional $L : \C[\x,\overline{\x},\y,\overline{\y}] \to \C$, one has $L(p) = \re(L(p))+ \ib \, \im(L(p)) $. If in addition $L$ is Hermitian, that is, satisfies $\overline{L(p)}=L(\overline{p})$, then one can associate to $L$ a real linear functional $L^{\R} : \R[\ab,\bb,\cb,\db] \to \R$, defined by
\[
L^{\R} (q) = L\left(  q \left(  \frac{\x+\overline{\x}}{2},   \frac{\x-\overline{\x}}{2 \ib},   \frac{\y+\overline{\y}}{2},   \frac{\y-\overline{\y}}{2 \ib}   \right) \right) .
\]
Then for any $p \in \C[\x,\overline{\x},\y,\overline{\y}]$, one has $L(p) = L^{\R} (p_{\re}) + \ib \,  L^{\R} (p_{\im})$. 
For any $t \geq 2$, we can now provide the following relaxation over real linear functionals:
\begin{equation}
\label{eq:momR}
\begin{aligned}
\inf_{L^{\R}} \quad  & L^{\R}(f^{\re})   \\	
\text{s.t.}
\quad & L^{\R}: \R[\ab,\bb,\cb,\db]_{2t} \to \R  \text{ linear}, \\
\quad & L^{\R}(\ab \ab^T + \bb \bb^T) = \re(\rho), \\
\quad & L^{\R}(\bb \ab^T - \ab \bb^T) = \im(\rho), \\
\quad & L^{\R}(\cb \cb^T + \db \db^T) = \re(\nu), \\
\quad & L^{\R}(\db \cb^T - \cb \db^T) = \im(\nu), \\
\quad & L^{\R} \geq 0 \text{ on } \cQ^{\R}(\X_{\re})_{2t}.
\end{aligned}
\end{equation}
As a consequence of the above defined transformations, the relaxation \eqref{eq:momR} is equivalent to the moment relaxation \eqref{eq:momC} over Hermitian linear functionals. 
In particular the equivalence  
\[
L \geq 0 \text{ on } {\mathcal{Q}}(\X)_{2t} \iff L^{\R} \geq 0 \text{ on } {\mathcal{Q}}^{\R}(\X_{\re})_{2t} 
\]
is proved in \cite[Lemma 33]{gribling2022bounding}. 
As in the complex setting, one can show that the hierarchy of relaxations converges to $W^2_2(\rho,\nu)$ when $t$ goes to infinity, that is
\begin{equation}
\label{eq:infmomR}
\begin{aligned}
W^2_2(\rho,\nu) = \inf_{L^{\R}} \quad  & L^{\R}(f_{\re})   \\	
\text{s.t.}
\quad & L^{\R}: \R[\ab,\bb,\cb,\db] \to \R  \text{ linear}, \\
\quad & L^{\R}(\ab \ab^T + \bb \bb^T) = \re(\rho), \\
\quad & L^{\R}(\bb \ab^T - \ab \bb^T) = \im(\rho), \\
\quad & L^{\R}(\cb \cb^T + \db \db^T) = \re(\nu), \\
\quad & L^{\R}(\db \cb^T - \cb \db^T) = \im(\nu), \\
\quad & L^{\R} \geq 0 \text{ on } {\mathcal{Q}}^{\R}(\X_{\re}).
\end{aligned}
\end{equation}
The latter formulation can be written equivalently as an infinite-dimensional linear problem over real positive Borel measures supported on $\X_{\re}$:
\begin{equation}
\label{eq:measR}
\begin{aligned}
W^2_2(\rho,\nu) = \inf_{\mu^{\R}\in \cM(\X_{\re})} \quad  & \int f_{\re} \di \mu^{\R}   \\	
\text{s.t.}
\quad & \int (\ab \ab^T + \bb \bb^T) \di \mu^{\R} = \re(\rho), \\
\quad & \int (\bb \ab^T - \ab \bb^T) \di \mu^{\R} = \im(\rho), \\
\quad & \int (\cb \cb^T + \db \db^T) \di \mu^{\R} = \re(\nu), \\
\quad & \int (\db \cb^T - \cb \db^T) \di \mu^{\R} = \im(\nu). \\
\end{aligned}
\end{equation}
Note that every feasible solution $\mu_{\R}$ of \eqref{eq:measR} must be a probability measure. 
Indeed, combining the first equality constraint together with the fact that $\mu_{\R}$ is supported on $\X_{\re}$ yields
\[
\int \Tr(\ab \ab^T + \bb \bb^T) \di \mu^{\R} = \int (\|\ab\|^2+\|\bb\|^2) \di \mu^{\R} = \int \di \mu^{\R} = \Tr(\re(\rho))=1.\] 
The dual of \eqref{eq:measR} is given by
\begin{equation}
\label{eq:dualR}
\begin{aligned}
\sup_{\Lambda_i,\Gamma_i \in \R^{n \times n}} \quad  & \Tr (\re(\rho) \Lambda_1 + \re(\nu) \Gamma_1 + \im(\rho) \Lambda_2 + \im(\nu) \Gamma_2) \\	
\text{s.t.}
\quad & f_{\re} \geq \Tr ((\ab \ab^T + \bb \bb^T)\Lambda_1 + (\cb \cb^T + \db \db^T)\Gamma_1 
 + (\bb \ab^T - \ab \bb^T)\Lambda_2 \\
\quad & \quad \quad \quad + (\db \cb^T - \cb \db^T)\Gamma_2) \text{ on } \X_{\re}, \\
\quad & \Lambda_1^T=\Lambda_1, \quad \Lambda_2^T=-\Lambda_2, \quad \Gamma_1^T=\Gamma_1, \quad \Gamma_2^T=-\Gamma_2. 
\end{aligned}
\end{equation}
\subsection{Proof of Theorem \ref{th:qwratereal}}
\label{sec:proofqw}
We are now in the appropriate setup to apply Lemma \ref{lemma:gmp}. 
\begin{proof}
The three required assumptions are as follows:
\begin{enumerate}
\item the instance of the generalized moment problem has a non-empty feasible set;
\item there exists $(\Lambda_i, \Gamma_i)$ such that 
$\Tr ((\ab \ab^T + \bb \bb^T)\Lambda_1 + (\cb \cb^T + \db \db^T)\Gamma_1 + (\bb \ab^T - \ab \bb^T)\Lambda_2 + (\db \cb^T - \cb \db^T)\Gamma_2)$ 
is positive on  $\X_{\re}$;
\item the quadratic module ${\mathcal{Q}}^{\R}(\X_{\re})$ is Archimedean. 
\end{enumerate} 
The first assumption is true because the original problem has a non-empty feasible set $\quantum(\rho,\nu)$: for any spectral decompositions $\rho = \sum_{\ell} \omega_{\ell} \qu_{\ell} \qu_{\ell}^*$ and $\nu = \sum_{j} \omega'_{j} \qv_j \qv_j^*$, one has $\{(\omega_{\ell} \omega'_{j}, \qu_{\ell}, \qv_{j})_{\ell,j}\} \in \quantum(\rho,\nu)$. \\
The second assumption holds with $\Lambda_1=\Gamma_1=I_n$ and  $\Lambda_2=\Gamma_2=0$, since the polynomial
$\Tr (\ab \ab^T + \bb \bb^T + \cb \cb^T + \db \db^T) = {\|\ab\|^2 + \|\bb\|^2 + \|\cb\|^2 + \|\db\|^2}$ is equal to $2 > 0$ on $\X_{\re}$. \\
The third assumption is obviously true since the polynomial $2 - {\|\ab\|^2 - \|\bb\|^2 - \|\cb\|^2 - \|\db\|^2} $ belongs to ${\mathcal{Q}}^{\R}(\X_{\re})$. 

{The result from Lemma \ref{lemma:gmp} can then be applied together with Theorem \ref{th:bisphererate} (see Remark \ref{rk:bisphererate}) to obtain that for all $t \geq 4 n d \sqrt{d} = 32 n$, one has}
\begin{align}
\label{eq:kappafirst}
0 \leq W^2_2(\rho,\nu) - W^2_2(\rho,\nu)_{{t}} \leq \frac{\kappa(n)}{t^2}, \nonumber \\
\text{with } \kappa(n) := {4}  \frac{3}{2} C_{\X_{\re}(2n,4)} \left({2} f_{\re,\max} + \|w^{\opt}\|_1 h_{\max}\right),
\end{align}
where
\begin{itemize}
\item the numerator of the constant $\frac{3}{2}$ has been obtained by adding 1 to  the evaluation of the dual objective function at $\Gamma_1=\Lambda_1=I_n$   and $\Lambda_2=\Gamma_2=0$, that is the tuple we used above to satisfy  the last assumption from Lemma \ref{lemma:gmp}. 
The denominator corresponds to the minimum of $\Tr ((\ab \ab^T + \bb \bb^T) + (\cb \cb^T + \db \db^T) = \|\ab\|^2 + \|\bb\|^2 + \|\cb\|^2 + \|\db\|^2$  on $\X_{\re}$;
\item the constant $f_{\re,\max}$ is the maximum of $f_{\re}$ on $\X_{\re}$;
\item the constant $\|w^{\opt}\|_1$ is the sum of absolute values of all entries of a tuple $(\Lambda_1,\Lambda_2,\Gamma_1,\Gamma_2)$ achieving the supremum in the dual \eqref{eq:dualR}; 
\item the constant $h_{\max}$ is the maximum value over $\X_{\re}$ {achieved by the polynomials involved in the equality constraints of the primal \eqref{eq:measR}, i.e., by terms of the form $a_i a_j + b_i b_j$, $a_i b_j - b_j a_i$, $c_i c_j + d_i d_j$, or $c_i d_j - d_j c_i$}. 
\end{itemize}
We end the proof by providing estimates for the 3 quantities $f_{\re,\max}$, $\|w^{\opt}\|_1$ and $h_{\max}$. 

1. The first quantity $f_{\re,\max}$ is equal to the maximum $f_{\max}$ of $f$ on $\X$. 
One has 
\begin{align*}
f(\x,\overline{\x},\y,\overline{\y})= \Tr[(\x \x^* - \y \y^*) \overline{(\x \x^* - \y \y^*)}] & = \left|\sum_{i=1}^n x_i^2\right|^2 + \left|\sum_{i=1}^n y_i^2\right|^2 - 2 \left|\sum_{i=1}^n x_i y_i\right|^2 \\
& \leq \left(\sum_{i=1}^n |x_i|^2\right)^2 + \left(\sum_{i=1}^n |y_i|^2\right)^2.
\end{align*}
This shows that $f_{\max} \leq 2$. In addition, $f(\x,\overline{\x},\y,\overline{\y}) = 2$  for $\x=(1,0,\dots,0)$ and $\y=(0,1,0,\dots,0)$. Therefore, one has $f_{\max}=f_{\re,\max}=2$.

2. Since the complex dual \eqref{eq:dualC} is attained at a bounded tuple $(\Lambda,\Gamma)$, the real dual \eqref{eq:dualR} is also attained at a bounded tuple $(\Lambda_1,\Lambda_2,\Gamma_1,\Gamma_2)$, with $\Lambda=\Lambda_1 + \ib \, \Lambda_2$ and $\Gamma=\Gamma_1 + \ib \, \Gamma_2$. \\ 
For all $z = a + \ib \, b$ with $a,b \in \R$, one has $|a| + |b| \leq \sqrt{2} \sqrt{a^2 + b^2} = \sqrt{2} |z|$, thus $\|\Lambda_1\|_1 + \|\Lambda_2\|_1 \leq \sqrt{2} \| \Lambda \|_1$, and $\|\Gamma_1\|_1 + \|\Gamma_2\|_1 \leq \sqrt{2} \| \Gamma \|_1$. 

\if{ 
2. Under Assumption \ref{hyp:bound}, let us handle specifically the case where $\nu_{\min} > \rho_{\max}$,  the other case can be treated analogously. 
Let us show that in this case the second quantity $\|w^{\opt}\|_1$ is less than $2 \sqrt{2} n \left(1 -  \frac{\rho_{\max}^2}{\nu_{\min}^2}\right)$. 
Note that every optimal tuple $(\Lambda_1,\Lambda_2,\Gamma_1,\Gamma_2)$ for the real dual \eqref{eq:dualR} corresponds to an optimal tuple $(\Lambda,\Gamma)$ for the complex dual \eqref{eq:dualC}, with $\Lambda=\Lambda_1 + \ib \, \Lambda_2$ and $\Gamma=\Gamma_1 + \ib \, \Gamma_2$. \\ 
For all $z = a + \ib \, b$ with $a,b \in \R$, one has $|a| + |b| \leq \sqrt{2} \sqrt{a^2 + b^2} = \sqrt{2} |z|$, thus $\|\Lambda_1\|_1 + \|\Lambda_2\|_1 \leq \sqrt{2} \| \Lambda \|_1$, and $\|\Gamma_1\|_1 + \|\Gamma_2\|_1 \leq \sqrt{2} \| \Gamma \|_1$. 
For all $\delta > 0$, the tuple $(\Lambda + \delta I,\Gamma - \delta I)$ is also optimal for \eqref{eq:dualC}  since 
\[\Tr (\rho (\Lambda + \delta I) + \nu (\Gamma - \delta I )) 
= \Tr (\rho \Lambda  + \nu \Gamma) + \delta \Tr (\rho - \nu) ) = \Tr (\rho \Lambda  + \nu \Gamma),\] and 
\[\x^* (\Lambda + \delta) I \x -  \y^* (\Gamma - \delta I) \y = \x^* \Lambda \x -  \y^* \Gamma \y,\] for all $(\x,\y) \in  \X$. 
Thus, we assume without loss of generality that $\Lambda, -\Gamma \succeq 0$. 
Let us note $\mathbf{e}_i$ the vector having only 0 components except a component 1 in the $i$-th place. 
Since the polynomial $f - \x^* \Lambda \x - \y^* \Gamma \y $ is nonnegative on $\X$, one has in particular for $\x = \y = \mathbf{e}_i$:
\[
    \Lambda_{ii} +  \Gamma_{ii} \leq f_{\max} = 2,
\]
implying that $\Tr(\Lambda) \leq 2 - \Tr(\Gamma)$. 
In addition, the objective function $\Tr(\rho \Lambda) + \Tr(\nu \Gamma)$ is always nonnegative, thus 
\[\rho_{\max} \Tr(\Lambda) \geq \Tr(\rho \Lambda) \geq - \Tr(\nu \Gamma) \geq - \nu_{\min} \Tr(\Gamma).\]
After combining the two above inequalities we obtain $\Tr(\Lambda) - \Tr (\Gamma) \leq 2 \left(1 -  \frac{\rho_{\max}^2}{\nu_{\min}^2}\right)$. 

The desired bound follows then from the Cauchy-Schwartz inequality:
\begin{align*}
\|\Lambda_1\|_1 + \|\Lambda_2\|_1 + \|\Gamma_1\|_1 + \|\Gamma_2\|_1
& \leq \sqrt{2} \left(\|\Lambda\|_1 +  \|\Gamma\|_1 \right)\\
& \leq \sqrt{2} \sqrt{n}  \left(\sqrt{\Tr(\Lambda^2)} + \sqrt{\Tr(\Gamma^2)}\right) \\
& \leq \sqrt{2} n( \Lambda_{\max} - \Gamma_{\min}) \leq \sqrt{2} n (\Tr(\Lambda) - \Tr (\Gamma)). 
\end{align*}
where  $\Lambda_{\max}$ and $\Gamma_{\min}$ denotes the largest and smallest eigenvalues of $\Lambda$ and $\Gamma$, respectively.
}\fi

3. The third quantity $h_{\max}$ is equal to 2, since each term of the form $a_i a_j + b_i b_j$, $a_i b_j - b_j a_i$, $c_i c_j + d_i d_j$, or $c_i d_j - d_j c_i$, is upper bounded by 2 on $\X_{\re}$.  \\
We have just proved that 
\begin{align}
\label{eq:kappa}
\kappa(n) \leq  {12 \left(2+ \sqrt{2} (\|\Lambda\|_1 + \|\Gamma\|_1) \right)} C_{\X_{\re}(2n,4)}.
\end{align}
\if{ 
We have just proved that 
\begin{align}
\label{eq:kappa}
\kappa(n) \leq  \frac{3}{2} \left(3+4 \sqrt{2} n \left(1 -  \frac{\rho_{\max}^2}{\nu_{\min}^2}\right) \right) C_{\X_{\re}(2n,4)}.
\end{align}
}\fi
\end{proof}
\section{Concluding remarks}
We have proved a convergence rate in $O(1/t^2)$ for the Schm\"udgen-type (or equivalently Putinar-type)   hierarchies of lower bounds for the minimization of a polynomial over the bi-sphere. 
Thanks to this result, we could provide a similar rate for a hierarchy of semidefinite programs approximating the $2$ order quantum Wasserstein distance. 
We also extended this result to arbitrary sphere products, and products of distinct sets while assuming that a rate is available for each set. 
Our proof technique heavily relies on the polynomial kernel method. 

We would like to emphasize that all obtained convergence rates for hierarchies of lower bounds also hold for the hierarchies of \textit{upper} bounds developed in \cite{lasserre2011new}, by following the same reasoning as the one from \cite[Section 5]{slot2022sum}. 
In addition, our derived rates can be combined with the general result from \cite{gamertsfelder2025effective} to obtain rates for GMP instances with measures supported on set products. 

Recently, the convergence rate of the Putinar-type hierarchy for polynomial optimization over the hypercube has been revisited in \cite{gribling2025revisiting}. 
The idea is also to rely on the polynomial kernel method with a suitably chosen Gaussian density. 
An interesting open question would be to investigate whether this idea could be extended for set products. 

For the case of the unit sphere, we would like also to mention the recent works   \cite{lovitz2023hierarchy,blomenhofer2024moment}, where the authors provide convergence rates in $O(1/t)$ for a specialized hierarchy of spectral bounds, using quantum and real de Finetti theorems, respectively. 
A natural research track would be to extend this analysis to the case of the bi-sphere.

Eventually, it has been proved in \cite{nie2014optimality} that the Putinar-type hierarchy converges in finitely many steps, under genericity assumptions on the objective function and the polynomials describing the feasible set. 
When optimizing a generic polynomial over the unit sphere, it has been shown in \cite{huang2023optimality} that one also has finite convergence. 
Here again, it would be interesting to obtain similar finite convergence results for specific set products such as the bi-sphere. 


\paragraph{Acknowledgments.} We thank Jonas Britz, Saroj Prasad Chhatoi, Monique Laurent and Lucas Slot for fruitful discussions. 
This work was supported by the European Union's HORIZON--MSCA-2023-DN-JD programme under the Horizon Europe (HORIZON) Marie Sklodowska-Curie Actions, grant agreement 101120296 (TENORS). 



\end{document}